\documentclass{amsart}

\usepackage[utf8]{inputenc}
\usepackage{amsmath}
\usepackage{amsfonts}
\usepackage{amssymb}
\usepackage{amsthm}
\usepackage[english]{babel}
\usepackage{fontenc}
\usepackage{graphicx}
\usepackage{fullpage}
\usepackage{color}
\usepackage{mathrsfs}
\usepackage[T1]{fontenc} 
\usepackage{mathtools}
\usepackage{graphicx}
\usepackage{stmaryrd}
\usepackage{multirow}
\usepackage[backend=biber,
            style=numeric,
            sorting=nyt,        % Sort by name -> year -> title
            giveninits=true,     % Use initials for first names
            maxbibnames=99,      % Show all authors
            isbn=false, doi=false, url=false % hide ISBN/DOI/URL
           ]{biblatex}

\DeclareNameAlias{sortname}{family-given}

\DeclareFieldFormat[article]{title}{#1}
\DeclareFieldFormat[inproceedings]{title}{#1}

\usepackage{csquotes}
\usepackage{dsfont}
\usepackage{tikz}
\usepackage{caption}
\usepackage{float}
\usepackage{algorithm}
\usepackage{verbatim}
\usepackage{algpseudocode}
\usepackage{caption}
\usepackage{subcaption}

\usetikzlibrary{calc}

\usepackage[normalem]{ulem}

\newtheorem{defi}{Definition}[section]

\newtheorem{rem}[defi]{Remark}

\newtheorem{prop}[defi]{Proposition}

\numberwithin{equation}{section}

\newcommand{\R}{\mathbb R}
\newcommand{\C}{\mathbb C}
\newcommand{\N}{\mathbb N}

\newcommand{\Hl}{\mathrm{H\mkern-2mu\ell}}

\renewcommand{\leq}{\leqslant}
\renewcommand{\geq}{\geqslant}

\newcommand{\pinf}{+\infty}
\newcommand{\minf}{-\infty}

\newcommand{\indic}[1]{\mathds{1}_{#1}}
\newcommand{\equ}[1]{\underset{#1}{\sim}}

\allowdisplaybreaks

\DeclareMathOperator\supp{supp}

\usepackage{hyperref}

\title{Computing resonances of perturbed Schrödinger equations: application to Reissner--Norsdtröm--de Sitter black holes}

\author{Valentin Arrigoni, Geneviève Dusson}
\thanks{Université Marie et Louis Pasteur, CNRS, LmB (UMR 6623), F-25000 Besançon, France.\\
Corresponding author: \href{mailto:valentin.arrigoni@math.cnrs.fr}{valentin.arrigoni@math.cnrs.fr}}

\date{}

\begin{document}

\maketitle

\begin{abstract}
We present a numerical method for computing resonances of one-dimensional Schrödinger equations perturbed by a compactly supported potential, via finding zeros of the Wronskian associated with Jost solutions of the reference equation, computed through the resolution of Cauchy problems. 
All resonances located in a given domain are found efficiently using a deflated Newton algorithm. A key ingredient of the method is the choice of reference potential for which Jost solutions are known, which removes spurious resonances often encountered numerically. We test this method on three types of reference potentials and perturbations thereof: Pöschl--Teller potentials, exponentially decaying potentials, and potentials associated with Reissner--Nordström--de Sitter black holes. In particular we study the impact of perturbations on the resonances, and the stability of small resonances under perturbation.  
As an illustration, we use the method to numerically study the strong cosmic censorship hypothesis.
\end{abstract}

\section{Introduction}
\label{Intro}
Resonances appear in many different scientific domains such as spectral analysis, quantum mechanics or general relativity. Following Zworski~\cite{ZworskiResonances}, they correspond to a notion of eigenvalues to systems which allow energy to scatter to infinity. 
In the specific context of general relativity and black holes, resonances also called quasinormal modes are strongly linked with gravitational waves. Their detection in 2015 by LIGO~\cite{GravWave} has made these resonances highly relevant for study within the community. From a mathematical perspective, they can be defined as poles of a 
resolvent operator~\cite[Definition 1]{ZworskiResonances}.
In this article, we are interested in the computation of such resonances for one-dimensional Schrödinger equations. From a physical point of view, the most important resonances are those of smallest imaginary part
as they 
decay more slowly and are thus more likely to be detected by sensors.

So far, many theoretical results have been obtained. In 1987, Zworski~\cite{Zworski87} established an asymptotics for the number of resonances inside a disk of growing size, valid for compactly supported potentials. He also proved that resonances are asymptotically located on two logarithmic branches, depending among other things, on the support and regularity of the potential.
Guillopé and Zworski~\cite{GuillopeZworski95,GuillopeZworski}, working on hyperbolic manifolds, obtained results on the number of resonances inside a disk of a given size. Hitrik~\cite{Hitrik} proposed estimates for the number of resonances in a disk for the class of one-dimensional exponentially and super-exponentially decaying potentials. 
Stepin and Tarasov \cite{StepTara07} refined the asymptotics for compactly supported potentials on the line and provided results on the location of resonances for super-exponentially decaying potentials in~\cite{StepTara09}. On the half-line, considering a compactly supported perturbation of an exponentially decaying potential, Borthwick, Boussaïd, and Daudé \cite{BBD} established asymptotics for the resonances. More recently, the first author~\cite{VA} derived asymptotic formulas for resonances when the potential is a compactly supported perturbation of a Pöschl--Teller potential.
A specific field where resonances have been studied is general relativity and especially black holes.
Bachelot and Motet-Bachelot~\cite{BMB} studied resonances for Schwarzschild black holes.
For the same black holes, Sá Barreto and Zworski~\cite{SaBaZwo} gave approximation formulas for resonances when the angular momentum increases.
Dyatlov~\cite{Dyatlov} worked on Kerr--de Sitter black holes, providing a definition for resonances.

Concerning quasinormal modes of Reissner--Nordström--de Sitter black holes,
Iantchenko \cite{Iantchenko} established asymptotic formulas for the resonances. 
Results for more general black holes (Kerr–Newman–de Sitter) can also be found in~\cite{Suzuki}. 
We mention the book of Dyatlov and Zworski~\cite{DyaZwo} which summarizes many results on resonances, in particular for black holes.
Since asymptotic analysis only brings information on the resonances with large modulus, these previous works cannot be used to locate the smallest ones. As far as we are aware, the only theoretical work on the location of small resonances is~\cite{HinXie} for de Sitter--Schwarzschild black holes in the limit where the mass parameter tends to zero. In general, obtaining precise information on the smallest resonances seems difficult to access via theoretical analysis.
This is our main motivation for proposing a numerical approach to compute them.

The problem of the numerical computation of resonances has already been studied in the past, but in a different framework. Bindel and Zworski~\cite{BZ} developed an approach to resonance computation for compactly supported potentials, via the resolution of an eigenvalue problem. 
Another well-developped numerical methods is called complex scaling~\cite{Datchev,Cerioni}.
More recently, Duchemin and coauthors~\cite{Levitt} developed a generalised method to compute resonances induced by localised defects in crystals, using a reformulation of the problem as an integral equation (Birman-Schwinger), and showed that their method performs better than complex scaling.
Regarding the numerical resonances associated with black holes, several methods have been proposed.
Kokkotas and Schmidt \cite{KokkoSch} computed resonances for stars and black holes using the Wentzel–Kramers–Brillouin (WKB) method. The Frobenius method was also used, see e.g.~\cite{Zhi,Leaver85,Nollert}.
It is also possible to exploit the link between quasinormal modes of potential barriers and bound states of the corresponding potential wells to compute quasinormal modes~\cite{Volkel,Mashh}. 
Other works perform pseudospectrum calculations using Chebyshev spectral method~\cite{JLJ,Aimer,Destounis}.
Note that most of these works use Pöschl–Teller potentials as a toy model for real black holes, since the resonances for this class of potentials are exactly known.

In this work, we propose a new approach to compute resonances, exploiting the fact that we are considering a one-dimensional scattering problem. Indeed in this specific setting, we are able to define resonances as zeros of a Wronskian $w$, instead of poles of the meromorphic continuation of the resolvent in the lower half-plane. 
Thus we then determine the zeros of  of a complex function, which we do using a deflated version of the classical Newton’s method~\cite{Farrell}. We consider perturbations with compact support of a reference potential, and we are free to  choose the reference potential, as long as we know the corresponding Jost solutions. 
This avoids numerical instabilities compared to using the null potential as a reference.
Moreover, the deflated Newton's method allows to compute all resonances in a given domain, the total number of resonances being computed beforehand using a contour integration, hence we are not limited to the computation of only a few resonances. 
Even though our method is currently limited to the one-dimensional case and higher-dimensional cases with symmetries, we hope to provide an alternative definition of resonances in higher dimensions in order to apply this approach to a larger class of problems. This is left for future work.

The article is organised as follows. 
In Section~\ref{Sec1}, we present the proposed numerical method to compute the resonances.
Section~\ref{Sec2} presents Jost solutions for three different reference potentials: Pöschl--Teller, exactly exponentially decaying and Reissner--Nordström--de Sitter black holes potentials.
Section~\ref{Sec3} focuses on numerical experiments. We begin by analysing several results related to the deflated Newton's method, and observe stability of small resonances for perturbations of the considered reference potentials.
We conclude by numerically studying the strong cosmic censorship hypothesis.
\section{Numerical method for computing resonances}
\label{Sec1}
\subsection{Mathematical setting}
\label{MathsSetting}

In the following we use the notation $\N = \{1,2,\ldots \}$ and $\N_0 = \{0,1,2,\ldots\}$.

Let $V:\R\rightarrow\R$ be a function in $L^1_1(\R)$,
that is
\[
\int_\R \left( 1+|x| \right)|V(x)| \, \mathrm{d}x < \pinf.
\]
Given $z \in \C$, we consider the following Schrödinger equation:
\begin{equation}
-y''+Vy = z^2y, \quad \text{in} \; \R.
\label{Premiere}
\end{equation}
Among all solutions to \eqref{Premiere}, we call Jost solution \cite[Chapter 6]{Levitan} and denote by $f^+$ (respectively $f^-$) the unique solution to~\eqref{Premiere} satisfying at $\pinf$ (respectively $\minf$) the prescribed asymptotics: 
\begin{equation}
f^\pm(x,z) \equ{x \to \pm \infty} e^{\pm ixz}.
\label{Asymptotics}
\end{equation}
More generally, we still call Jost solutions, the solutions to second-order ordinary differential equations satisfying~\eqref{Asymptotics}.
From these solutions, we can define a Wronskian, also called Jost function,
\[
\begin{array}{ccccc}
w & : & \C & \to & \C \\
 & & z & \mapsto & [f^-(x_0,z),f^+(x_0,z)],\\
\end{array}
\]
where $x_0 \in \R$ and $[f,g] := fg'-f'g$. Since \eqref{Premiere} is a second-order linear ordinary differential equation without a first-order term, the Wronskian between the two solutions $f^+$ and $f^-$ indeed does not depend on the first variable $x_0$. For a general potential $V \in L^1_1(\R)$, the Wronskian $w$ is always defined on the closed upper-half plane, and is analytic on the open upper-half plane. 
We consider throughout the remainder of this work, $V$ such that the Jost solutions (and so $w$) can be extended meromorphically
to the lower half-plane. An example of such potentials are Pöschl--Teller potentials \cite[Section~2, (14)]{VA}.
We can now express eigenvalues and resonances in terms of this function $w$, denoting by $\Im(z)$ the imaginary part of $z\in\C$. 
\begin{defi}
    \begin{itemize}
    \item A complex number $z^2$, for $z = ik$ where $k>0$, is called an eigenvalue if $z$ is a zero of the function $w$ in the open upper half-plane $\C^+ := \{ z \in \C ~|~ \Im(z)>0 \}$.
    \item A complex number $z^2$ is called a resonance if $z$ is a zero of the meromorphic continuation of $w$ in the open lower half-plane $\C^- := \{ z \in \C ~|~ \Im(z)<0 \}$.
    \end{itemize}
    \label{EigRes}
\end{defi}
No matter whether $z^2$ is an eigenvalue or a resonance, the fact that $z$ is a zero of $w$ implies that, for such $z \in \C$, the two Jost solutions $f^+(\cdot,z)$ and $f^-(\cdot,z)$ are colinear. Using the conditions \eqref{Asymptotics}, a direct calculation proves that when $z^2$ is an eigenvalue, the two Jost solutions lie in $L^2(\R)$ and are colinear eigenvectors associated with this eigenvalue. By contrast, if $z^2$ is a resonance, then the Jost solutions are no longer in the space $L^2(\R)$, again due to the asymptotics \eqref{Asymptotics}. Hence, resonances can be understood as eigenvalues without eigenvectors. Finally, $w(z)\neq 0$ for $z\in \R\backslash \{0\}$~\cite[Lemma 1 (i) and (iv)]{Bledsoe}, and when $w(0)=0$, we say that there exists a half-bound state. 

In order to locate resonances, we will solve the equation $w(z)=0$ in a given domain  $\Omega\subset\C^-$. To do so, we first explain how to evaluate the Wronskian $w(z)$.
\subsection{Evaluation of the Wronskian}
In this section, we consider real-valued potentials $V$ 
expressed
as the sum of two functions: $V= V_0 + q$. The first function $V_0 \in L^1_1(\R)$ is called the reference potential. 
Examples of such potentials $V_0$ are Pöschl--Teller potentials: $x \mapsto \frac{\lambda}{\cosh^2(x)}$, for $\lambda \in \R \backslash \{0\}$. The second function $q$ is a real, integrable and compactly supported function such that its support satisfies $\supp{q} \subset [\alpha,\beta]$, $\alpha,\beta \in \R$.
We denote by $f^\pm_0$ the two Jost solutions associated with the following Schrödinger equation
\[
-y''+ V_0y = z^2y, \quad \text{in } \R.
\]
In order to numerically compute Jost solutions associated with the potential $V$, we first characterize them by a Cauchy problem instead of an asymptotics. 
\begin{prop}\label{Prop1}
    Let $V$ be a potential such that $V = V_0 + q$ for $V_0 \in L^1_1(\R)$ and $q$ a real-valued, integrable and compactly supported function with its support in $[\alpha,\beta]$, $\alpha,\beta \in \R$. Let $z \in \C$ be such that the Jost solution $f^+_0$ associated with \eqref{Premiere} for $V=V_0$ is well-defined. Then, the Jost solution $f^+$ associated with the Schrödinger equation \eqref{Premiere} satisfies the Cauchy problem
    \begin{equation}
\left\{
\begin{array}{l}
-y'' + Vy = z^2y, \quad \text{in}~[\alpha,\beta]\\
y(\beta) = f^+_0(\beta,z)\\
y'(\beta) = (f^+_0)'(\beta,z).
\end{array}
\right.
\label{CP1}
\end{equation}
The notation $(f^+_0)'$ stands for the derivative of the function $f^+_0$ with respect to the first variable. 
\end{prop}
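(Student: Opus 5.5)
The plan is to show that $f^+$ and $f^+_0$ coincide on $[\beta,\pinf)$. The Cauchy data at $\beta$ then agree, and uniqueness for the linear ODE on $[\alpha,\beta]$ identifies $f^+$ there with the solution of \eqref{CP1}.

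\textbf{Step 1: $f^+_0$ solves the perturbed equation on $[\beta,\pinf)$.} Since $\supp q\subset[\alpha,\beta]$, we have $V=V_0$ on $(\beta,\pinf)$. Hence $f^+_0(\cdot,z)$ satisfies $-y''+Vy=z^2y$ there.

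\textbf{Step 2: $f^+_0$ is the Jost solution of $V$ on that half-line.} We extend $f^+_0|_{[\beta,\pinf)}$ to a solution $g$ of $-y''+Vy=z^2y$ on all of $\R$. The extension is obtained by solving the linear equation leftwards from $\beta$ with data $(f^+_0(\beta,z),(f^+_0)'(\beta,z))$. Because $V\in L^1_{\mathrm{loc}}$, the Carathéodory existence and uniqueness theory for linear ODEs applies and gives a unique global absolutely continuous solution. By construction $g$ solves \eqref{Premiere} and satisfies $g(x)=f^+_0(x,z)\sim e^{ixz}$ as $x\to\pinf$.

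By uniqueness of the Jost solution with asymptotics \eqref{Asymptotics}, $g=f^+$. This is the main point to justify. For $\Im z\geq 0$ it is the classical statement from \cite[Chapter 6]{Levitan}: the difference $h=f^+-g$ is a solution with $h=o(e^{ixz})$, and the Wronskian $[h,f^+]$ is constant. Comparing it with a second solution growing like $e^{-ixz}$ forces $h=0$.

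In the lower half-plane, uniqueness by asymptotics alone can fail, since $e^{-ixz}$ is then subdominant. There I would instead rely on meromorphic continuation. For $\Im z>0$ both sides are analytic in $z$ and agree, so they agree after continuation. Indeed $g(\cdot,z)$ depends analytically on $z$ wherever $f^+_0(\beta,z)$ and $(f^+_0)'(\beta,z)$ do, because solutions of linear ODEs depend analytically on the parameter and on the initial data. The identity theorem then extends $f^+=g$ to every $z$ at which $f^+_0$ is well defined. This continuation argument is the step I expect to require the most care.

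\textbf{Step 3: conclusion.} Since $f^+=f^+_0$ on $[\beta,\pinf)$, continuity of $f^+$ and $(f^+)'$ at $\beta$ gives $f^+(\beta,z)=f^+_0(\beta,z)$ and $(f^+)'(\beta,z)=(f^+_0)'(\beta,z)$. Therefore $f^+|_{[\alpha,\beta]}$ solves \eqref{CP1}, which is uniquely solvable, as claimed.
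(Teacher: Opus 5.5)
Your proof is correct and follows the same route as the paper: since $V=V_0$ on $[\beta,+\infty)$, you identify $f^+$ with $f^+_0$ there, and you then conclude on $[\alpha,\beta]$ by uniqueness for the Cauchy problem. The paper invokes ``uniqueness of Jost solutions'' for every admissible $z$, whereas you correctly note that uniqueness from the asymptotics alone fails for $\Im z<0$ and handle that case by analytic continuation from the upper half-plane, which makes the same argument more rigorous rather than changing it.
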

\begin{proof}
Let $z \in \C$ be such that the Jost solution $f^+_0$ associated with \eqref{Premiere} for $V=V_0$ is well-defined.
{For $x \geq \beta$, $V(x) = V_0(x)$. So, on $[\beta,\pinf)$, the uniqueness of Jost solutions proves that $f^+ = f_0^+$.}
    Finally, since $f^+$ is a solution of \eqref{Premiere} which satisfies both $f^+(\beta,z) = f_0^+(\beta,z)$ and $(f^+)'(\beta,z) = (f^+_0)'(\beta,z)$, then by the Cauchy--Lipschitz theorem, it is the unique solution of \eqref{CP1}.
    \end{proof}
A similar result holds for the other Jost solution $f^-$.
\begin{prop}
    Let $V$ be a potential such that $V = V_0 + q$ for $V_0 \in L^1_1(\R)$ and $q$ a real-valued, integrable and compactly supported function with its support in $[\alpha,\beta]$, $\alpha,\beta \in \R$. 
    Let $z \in \C$ be such that the Jost solution $f^-_0$ associated with \eqref{Premiere} for $V=V_0$ is well-defined.
    Then, the Jost solution $f^-$ associated with the Schrödinger equation \eqref{Premiere} satisfies the Cauchy problem
\begin{equation}
\left\{
\begin{array}{l}
-y'' + Vy = z^2y,\quad \text{in} \; [\alpha,\beta]\\
y(\alpha) = f^-_0(\alpha,z)\\
y'(\alpha) = (f^-_0)'(\alpha,z).
\end{array}
\right.
\label{CP2}
\end{equation}
\end{prop}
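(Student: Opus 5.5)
The plan is to mirror the proof of Proposition~\ref{Prop1}, replacing the half-line $[\beta,\pinf)$ by $(\minf,\alpha]$ and the asymptotics at $\pinf$ by those at $\minf$ in \eqref{Asymptotics}.

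\emph{Step 1: identify $f^-$ with $f^-_0$ to the left of the support.} Since $\supp q \subset [\alpha,\beta]$, we have $V(x) = V_0(x)$ for all $x \leq \alpha$. On $(\minf,\alpha]$, both $f^-_0(\cdot,z)$ and $f^-(\cdot,z)$ solve the same equation $-y''+V_0 y = z^2 y$, and both satisfy $f(x,z) \sim e^{-ixz}$ as $x \to \minf$. The Jost solution at $\minf$ is determined only by the behaviour of the potential on a neighbourhood of $\minf$, through the Volterra integral equation on $(\minf,x]$. Hence uniqueness of Jost solutions, applied on this half-line, gives $f^- = f^-_0$ on $(\minf,\alpha]$.

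\emph{Step 2: pass to the Cauchy data.} Both functions are $C^1$ solutions of a second-order ODE and coincide on the whole interval $(\minf,\alpha]$. Their derivatives therefore also coincide there, taking one-sided derivatives at $\alpha$ if necessary. This yields $f^-(\alpha,z) = f^-_0(\alpha,z)$ and $(f^-)'(\alpha,z) = (f^-_0)'(\alpha,z)$.

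\emph{Step 3: conclude by Cauchy--Lipschitz.} The function $f^-$ solves $-y''+Vy = z^2 y$ on $[\alpha,\beta]$ with these initial data at $\alpha$. Since $V$ is locally integrable, the linear Cauchy problem \eqref{CP2} has a unique solution, in the Carathéodory sense. That solution is therefore $f^-$ restricted to $[\alpha,\beta]$.

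\emph{Main obstacle.} The only delicate point is Step 1 when $z$ lies in $\C^-$, where $f^-_0$ is defined only through meromorphic continuation. Uniqueness there is not a direct consequence of the asymptotics, because for $\Im z<0$ the condition $f \sim e^{-ixz}$ no longer singles out one solution. The proof instead uses the hypothesis that $f^-_0$ is well defined at $z$. Both $f^-$ and $f^-_0$ are continuations of functions that agree for $\Im z > 0$, where Step 1 holds by classical uniqueness. For fixed $x \leq \alpha$ the two sides are meromorphic in $z$ and agree on an open set, so they agree wherever defined by the identity theorem. In particular $f^-$ is well defined wherever $f^-_0$ is, which settles the point.
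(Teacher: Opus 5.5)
Your proof is correct and follows the paper's own argument, which is written out for $f^+$ and simply mirrored for $f^-$: since $V=V_0$ on $(\minf,\alpha]$, uniqueness of Jost solutions gives $f^-=f^-_0$ there, and the Cauchy data at $\alpha$ then determine $f^-$ on $[\alpha,\beta]$. Your extra identity-theorem step for $z\in\C^-$, where the asymptotics $e^{-ixz}$ no longer single out a solution, fills in a point the paper covers only implicitly under the phrase ``uniqueness of Jost solutions.''
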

Since the Wronskian $w$ does not depend on the real variable $x_0$, its computation only requires the Jost solutions and their derivatives at a specific point. To evaluate $w$, we choose $x_0=\beta$. This way we only have to solve one Cauchy problem instead of two since $f^+(\beta,z) = f^+_0(\beta,z)$. We present the evaluation of the Wronskian in Algorithm~\ref{Algow}.
\begin{algorithm}[H]
\caption{Computation of the wronskian $w$}
\label{Algow}
\begin{algorithmic}[1]
\State Input: $z\in\C:$ evaluation point, $V_0$: reference potential, $q$: perturbation, $\alpha,\beta$: endpoints of the support of $q$.
\State Solve the Cauchy problem \eqref{CP2} to find $f^-$.
\State Output: value of the Wronskian at $z$: $(f^+_0)'(\beta,z)f^-(\beta,z) - (f^-)'(\beta,z)f^+_0(\beta,z)$.
\end{algorithmic}    
\end{algorithm}
\subsection{Location of the resonances via Newton's deflated method}

Once we are able to evaluate $w$, we find its zeros by using a deflated Newton's method~\cite{Farrell}. 
The idea is to apply Newton's method several times with modified functions in order to avoid converging several times to the same root. The modified function is the product of the original function $w$ and a specific deflation factor diverging near the already discovered zeros and tending to 1 far from them. Denoting by $Z$ the set of already found zeros, the deflation factor used in this work is
\begin{equation}
\forall u\in \C, \quad M(u) = \left\{
    \begin{array}{ll}
        1 & \mbox{if } Z = \emptyset \\
       \displaystyle \prod_{z \in Z} \left(\frac{1}{|u-z|}+1 \right) & \mbox{otherwise}
    \end{array}
\right.
\label{DeflaFact}
\end{equation}
Note that the factor $|u-z|$ is chosen to capture the possible higher multiplicity of the zeros. In the case of a zero with multiplicity $m$, the deflated Newton method will converge $m$ times to this zero. 
This would not be the case with higher powers of $|u-z|$.
We detail Newton's deflated method in Algorithm~\ref{Algo1}. In practice, the deflation only changes the step size and not the descent direction compared to the standard Newton's method.
\begin{algorithm}[H]
\caption{Newton's deflated method}
\label{Algo1}
\begin{algorithmic}[1]
\State Input: $w$: function whose zeros are to be determined, $Z$: known zeros of $w$, $z$: initial guess, $N$: maximum number of iterations, $\varepsilon$: tolerance.
\For {$i$ from 1 to $N$}
\State $\delta_f = -w'(z)/w(z)$.
\State $p = \delta_f \times M'(z)$.
\State $\tau = 1 + \frac{p/M(z)}{1-p/M(z)}$.
\State $z = z + \tau  \delta_f$
\If {$|w(z)| \leq \varepsilon$}
\State break
\EndIf
\EndFor
\State Output: $z$, a new zero of the function $w$.
\end{algorithmic}    
\end{algorithm}

In practice we are looking for the zeros of $w$ in a specific domain $\Omega$. 
First, due to the fact that $\overline{w(z)} = w(-\overline{z})$ \cite[Lemma 1.1]{VA}, resonances are symmetric with respect to the imaginary axis, so we only focus on resonances with a non-negative real part.
Second, we  determine a priori how many roots lie in $\Omega$, that is before starting to locate them, using the residue theorem.
Let $\gamma$ be a positively oriented simple closed curve in $\C^-$ enclosing~$\Omega$, $Z_\gamma$ and $P_\gamma$ be respectively the set of zeros and poles of $w$ in the interior of $\gamma$. 
Then an application of the residue theorem on $\gamma$ to the function $\frac{w'}{w}$ gives
\[
\frac{1}{2i\pi} \int_{\gamma} \frac{w'(z)}{w(z)} \, \mathrm{d}z = \sum_{z \in Z_\gamma \cup P_\gamma} \textrm{Res} \left(\frac{w'}{w},z\right)  = \# Z_\gamma - \# P_\gamma.
\]
In the following we typically work with rectangular domains, for which the path is shown in Figure~\ref{fig:Rectangle}.
{For the reference potential we will later be interested in, the location and multiplicity of the poles $P_\gamma$ are well-known (see Remarks~\ref{rem:poles1},~\ref{rem:poles2} and~\ref{rem:poles3}), so we can easily determine $\# Z_\gamma$.}
\begin{figure}[H]
\begin{center}
\begin{tikzpicture}[scale=0.5]

  \draw[->] (-7,0) -- (7,0) node[right] {Re};
  \draw[->] (0,-5) -- (0,2) node[above] {Im};

  \draw[thick,blue] 
    (-5,0) -- (5,0) -- (5,-3) -- (-5,-3) -- cycle;

  \draw[->,thick,blue] (0,0) -- (-3.5,0);
  \draw[->,thick,blue] (5,-3) -- (5,-1);
  \draw[->,thick,blue] (0,-3) -- (3.5,-3);
  \draw[->,thick,blue] (-5,0) -- (-5, -2);

  \filldraw[black] (-5,0) circle (2pt) node[above left] {$-R$};
  \filldraw[black] (5,0) circle (2pt) node[above right] {$R$};
  \filldraw[black] (5,-3) circle (2pt) node[below right] {$R-ir$};
  \filldraw[black] (-5,-3) circle (2pt) node[below left] {$-R-ir$};
\end{tikzpicture}

\captionsetup{justification=centering}
\end{center}
\caption{Illustration of the path along which the integral is computed.}
\label{fig:Rectangle}
\end{figure}

Finally, to obtain the collection of zeros, we apply the deflated Newton's method several times until we have found the correct number of zeros as detailed in Algorithm~\ref{AlgoLocZ}. 
\begin{algorithm}[H]
\caption{Location of all zeros}
\label{AlgoLocZ}
\begin{algorithmic}[1]
\State Input: $w$: function whose zeros are to be determined, $\Omega$: spatial domain, $N$: number of zeros in $\Omega$.
\State $Z:= \emptyset$
\While {$n < N$}
\State Choose $z_0$ randomly (uniformly) in the domain $\Omega$.
\State Apply Newton's deflated method (Algorithm \ref{Algo1}), starting at $z_0$, {with $Z$ as set of known zeros}.
\If {Newton's deflated method provides a solution $z$}
\State $Z = Z \cup \{ z\}$
\State $n = n + 1$
\EndIf
\EndWhile
\State Output: $Z$: list of all roots of $w$ in $\Omega$.
\end{algorithmic}    
\end{algorithm}
With this algorithm, we are able to find resonances for different compactly supported perturbation of reference potentials, provided that the Jost solutions for the corresponding reference potentials are known. 

\section{Jost functions for three reference potentials of interest}
\label{Sec2}

In this section, we present different reference potentials and corresponding Jost solutions. 
Working with specific reference potentials will avoid some numerical instabilities as presented in Section~\ref{ImpChoicePot}.
\subsection{Pöschl--Teller potentials}

For now we consider a reference potential for $\lambda \in \R\backslash\{0\}$
\begin{equation}
\label{eq:PT}
    V_0 : x \mapsto \frac{\lambda}{\cosh^2(x)}.
\end{equation}
This potential was introduced for the first time by G. Pöschl and E. Teller~\cite{PT} in their study of the quantum anharmonic oscillator, and can be seen as toy models for real black holes.
The two corresponding Jost functions $f^\pm_0$ can be explicitly written by means of the hypergeometric functions ${}_2F_1$ \cite[Definition 4]{VA}.
\begin{prop}{(See \cite[(12),(13)]{VA})}
Let $V_0$ be a Pöschl--Teller potential as defined in~\eqref{eq:PT}.
Let $z \in \C \backslash (-i\N)$.
Then, for $x \in \R$, the two Jost solutions associated with~\eqref{Premiere} are given by:
\begin{equation*}
    f^+_0(x,z) = (2\cosh(x))^{iz} {}_2F_1\left(a(z),b(z),c(z);\frac{1}{1+e^{2x}}\right),
    \label{Jost+}
\end{equation*}
and
\begin{equation*}
    f^-_0(x,z) = (2\cosh(x))^{iz} {}_2F_1\left(a(z),b(z),c(z);\frac{e^{2x}}{1+e^{2x}}\right),
    \label{Jost-}{}
\end{equation*}
with 
\begin{align*}
a(z) & := \frac{1}{2} - i z + \sqrt{\frac{1}{4}- \lambda}, \qquad 
b(z)  := \frac{1}{2} - i z - \sqrt{\frac{1}{4}- \lambda}, \qquad
c(z)  := 1 - i z.
\end{align*}
\end{prop}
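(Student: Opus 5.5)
The plan is to verify directly that the proposed functions solve \eqref{Premiere} with $V=V_0$ and satisfy the asymptotics \eqref{Asymptotics}. Uniqueness of Jost solutions then identifies them with $f^\pm_0$. By symmetry it suffices to treat $f^+_0$. Indeed, $V_0$ is even and $\frac{e^{2x}}{1+e^{2x}} = \frac{1}{1+e^{-2x}}$, so the claimed formula gives $f^-_0(x,z)=f^+_0(-x,z)$. Moreover, $x\mapsto f^+_0(-x,z)$ solves the same equation and has the asymptotics $e^{-ixz}$ at $\minf$.

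\textbf{Step 1 (change of variables).} Set $t = \frac{1}{1+e^{2x}}\in(0,1)$. Then
\[
\frac{\mathrm{d}t}{\mathrm{d}x} = -2t(1-t), \qquad t(1-t) = \frac{1}{4\cosh^2(x)},
\]
so $V_0 = 4\lambda\, t(1-t)$. Also $2\cosh(x) = (t(1-t))^{-1/2}$, hence the prefactor is $(2\cosh x)^{iz} = (t(1-t))^{-iz/2}$. I will look for solutions of the form $y(x) = (t(1-t))^{-iz/2}u(t)$. I will then compute $y''$ via the chain rule, using $\frac{\mathrm{d}}{\mathrm{d}x} = -2t(1-t)\frac{\mathrm{d}}{\mathrm{d}t}$, and substitute into $-y''+V_0y=z^2y$. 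After dividing by $4t(1-t)(t(1-t))^{-iz/2}$, the terms in $z^2$ should cancel. What should remain is the Gauss hypergeometric equation
\[
t(1-t)u'' + \bigl[c-(a+b+1)t\bigr]u' - ab\,u = 0,
\]
with $c=1-iz$, $a+b = 1-2iz$ and $ab = \lambda - iz - z^2$. These values are exactly produced by $a(z),b(z)$ as defined, since $ab=(\tfrac12-iz)^2-(\tfrac14-\lambda)$. This bookkeeping is the main (though routine) obstacle: one has to track the derivative of the prefactor carefully so that the first-order and zeroth-order coefficients match.

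\textbf{Step 2 (choice of solution and asymptotics).} Since $z\notin -i\N$, we have $c = 1-iz\notin\{0,-1,-2,\dots\}$. Hence ${}_2F_1(a,b,c;t)$ is well defined and is the solution analytic at $t=0$, with value $1$ there and with bounded derivative near $t=0$. As $x\to\pinf$, we have $t\sim e^{-2x}\to 0$, so ${}_2F_1(\cdots;t)=1+O(e^{-2x})$ and $(2\cosh x)^{iz} = e^{ixz}(1+e^{-2x})^{iz}\sim e^{ixz}$. This gives $f^+_0(x,z)\sim e^{ixz}$. The same expansion also gives $(f^+_0)'\sim iz e^{ixz}$, which is convenient for Proposition~\ref{Prop1}.

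\textbf{Step 3 (analytic continuation).} For $\Im z\geq 0$, uniqueness of the Jost solution in $L^1_1$ theory gives the identification. For $z$ in $\C^-\setminus(-i\N)$, both sides are analytic in $z$: ${}_2F_1$ is entire in $a,b$ and meromorphic in $c$, with poles only at $c\in -\N_0$, i.e.\ $z\in -i\N$. The formula therefore provides the meromorphic continuation, and the ODE and asymptotic properties persist pointwise by the computation above. This is the sense in which the paper uses Jost solutions in $\C^-$.
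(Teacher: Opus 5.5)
Your proof is correct. The substitution $t=(1+e^{2x})^{-1}$, $y=(t(1-t))^{-iz/2}u$ does reduce the equation to the hypergeometric equation with exactly $c=1-iz$, $a+b=1-2iz$, $ab=\lambda-iz-z^2$, and the normalization as $t\to 0$ together with the reflection $x\mapsto -x$ yields both formulas.

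The paper gives no proof of its own here (it defers to \cite[(12),(13)]{VA}). Your argument follows the same template the paper uses for the Bessel and Heun cases: change variables, pick the solution regular at the singular point, then normalize by the asymptotics. Your Step~3 usefully makes explicit something the paper glosses over: in $\C^-$ the asymptotics alone do not determine the solution, because $e^{-ixz}$ is subdominant there. The identification must therefore go through analytic continuation.
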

\begin{rem}[Poles]
    \label{rem:poles1}
    The hypergeometric functions $z \mapsto {}_2F_1\left(a(z),b(z),c(z);\zeta \right)$ have poles for $c(z) \in -\N_0$, so for $z \in -i\N$ \cite[Section 2]{VA}.
    It is then easy to see that the Jost solutions $f^\pm_0$ multiplied by $1/\Gamma(1-iz)$ are entire functions.
    Similarly, as detailed in~\cite[(34)-(35)]{VA}, the Jost solutions $f^\pm$ multiplied by $1/\Gamma(1-iz)$ denoted by 
    $\mathbf{f}^\pm$
    are also entire.
    From this we obtain that $[\mathbf{f}^-,\mathbf{f}^+]$ is also an entire function, and since
    \[
     w(z) = \Gamma(1-iz)^2 
    [\mathbf{f}^-(x,z),\mathbf{f}^+(x,z)],
    \]
    the Wronskian $w$ has poles for $z$ exactly in $-i\N$, see~\cite[Lemma 2]{VA}.
\end{rem}
\subsection{Exactly exponentially decaying potentials.}
\label{Eedp}

We now consider a generalisation of Pöschl--Teller potentials, which can be seen as refined toy models for the analysis of Reissner-Nordström--de Sitter black holes. Indeed these black holes have a corresponding potential expressed as a series in a neighbourhood of $\pm \infty$, and keeping only the first term of this series, we obtain exactly exponentially decaying potentials at $\pm \infty$.
Therefore we assume that, for the potential $V_0 :\R \rightarrow \R$, there exist $R_- < 0 < R_+$, $\kappa_->0$, $\kappa_+<0$ and $\mu_\pm >0$ such that
\begin{equation} \label{kappa+}
    V_0(x) = \mu_+ e^{2\kappa_+x}, ~x \geq R_+,
\end{equation}
and 
\begin{equation} \label{kappa-}
    V_0(x) = \mu_- e^{2\kappa_-x}, ~x \leq R_-.
\end{equation}
\begin{rem}
The $\kappa_\pm$, called surface gravity in the context of general relativity, satisfy the misleading property $\kappa_->0$ and $\kappa_+<0$.
\end{rem}
 Here the Jost solutions can be expressed using Bessel functions~\cite[Section 10.2]{NIST:DLMF}. 
\begin{prop}
Let $R_- < 0 < R_+$, $\kappa_- > 0$, $\kappa_+ < 0$, $\mu_\pm > 0$ and $V_0$ be an exponentially decaying potential satisfying \eqref{kappa+} and \eqref{kappa-}. Let $z \in \C$ such that $1+\frac{iz}{\kappa_+} \in \C \backslash (-\N_0)$ 
 and $1-\frac{iz}{\kappa_-} \in \C \backslash (-\N_0)$. Then the two Jost solutions associated with~\eqref{Premiere} satisfy
\begin{equation}
\forall R_+ \leq x, \quad {f}^+_0(x,z) := J_{\frac{iz}{\kappa_+}}\left(\frac{i\sqrt{\mu_+}}{\kappa_+} e^{\kappa_+x}\right)
\Gamma\left(1+\frac{iz}{\kappa_+}\right) \left( \frac{2 \kappa_+}{{i\sqrt{\mu_+}}} \right)^{\frac{iz}{\kappa_+}},
\label{Eq:JostFuncBessel+}
\end{equation}
and
\begin{equation}
\forall x \leq R_-, \quad
{f}^-_0(x,z) := J_{\frac{-iz}{\kappa_-}}\left(\frac{i\sqrt{\mu_-}}{\kappa_-} e^{\kappa_- x}\right)\Gamma\left(1-\frac{iz}{\kappa_-}\right) \left( \frac{2 \kappa_-}{{i\sqrt{\mu_-}}} \right)^{\frac{-iz}{\kappa_-}},
\label{Eq:JostFuncBessel-}
\end{equation}
where $J$ are the Bessel functions of first kind and $\Gamma$ is the Gamma function.
\end{prop}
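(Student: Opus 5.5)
We have to show two things for the function given in \eqref{Eq:JostFuncBessel+}: on $[R_+,\pinf)$ it solves \eqref{Premiere} with $V_0(x)=\mu_+e^{2\kappa_+x}$, and it behaves like $e^{ixz}$ as $x\to\pinf$. By uniqueness of Jost solutions this identifies it with $f^+_0$ there. The case of $f^-_0$ is the same computation with $\kappa_-$, $\mu_-$ and $x\to\minf$.

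\textbf{Step 1 (reduction to Bessel's equation).} Set $t=\frac{i\sqrt{\mu_+}}{\kappa_+}e^{\kappa_+x}$ and $\nu=\frac{iz}{\kappa_+}$, and write $y(x)=u(t)$. Then $\frac{dt}{dx}=\kappa_+t$, so $y'=\kappa_+tu_t$ and $y''=\kappa_+^2(t^2u_{tt}+tu_t)$. Also $t^2=-\frac{\mu_+}{\kappa_+^2}e^{2\kappa_+x}$, hence $\mu_+e^{2\kappa_+x}=-\kappa_+^2t^2$. The equation $-y''+\mu_+e^{2\kappa_+x}y=z^2y$ becomes
\[
-\kappa_+^2\left(t^2u_{tt}+tu_t+t^2u\right)=z^2u .
\]
Since $z^2=-\kappa_+^2\nu^2$, this is exactly Bessel's equation $t^2u''+tu'+(t^2-\nu^2)u=0$. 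So $J_\nu(t)$ solves it. Multiplying by the constant $\Gamma(1+\nu)\left(\frac{2\kappa_+}{i\sqrt{\mu_+}}\right)^{\nu}$ still gives a solution.

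\textbf{Step 2 (asymptotics).} Since $\kappa_+<0$, $t\to0$ as $x\to\pinf$. We use the series $J_\nu(t)=(t/2)^\nu\sum_{k\ge0}\frac{(-t^2/4)^k}{k!\,\Gamma(\nu+k+1)}$, which is entire in $t$ after the $(t/2)^\nu$ factor. It gives $J_\nu(t)\sim\frac{(t/2)^\nu}{\Gamma(\nu+1)}$, provided $1/\Gamma(\nu+1)\neq0$, i.e.\ $1+\nu\notin-\N_0$. This is exactly the hypothesis on $z$, and it also makes the $\Gamma$ prefactor finite. Then
\[
\Gamma(1+\nu)\Big(\tfrac{2\kappa_+}{i\sqrt{\mu_+}}\Big)^{\nu}\frac{(t/2)^\nu}{\Gamma(1+\nu)}=\Big(\tfrac{2\kappa_+}{i\sqrt{\mu_+}}\cdot\tfrac{i\sqrt{\mu_+}}{2\kappa_+}e^{\kappa_+x}\Big)^{\nu}=e^{\kappa_+x\nu}=e^{ixz}.
\]
The remainder is $O(t^2)=O(e^{2\kappa_+x})\to0$ in relative terms. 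The same series, differentiated termwise, gives the derivative asymptotics $(f_0^+)'\sim ize^{ixz}$ if one wants them.

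\textbf{Main obstacle.} The delicate point is the branch of the complex powers. The step $(a)^\nu(b)^\nu=(ab)^\nu$ and $(t/2)^\nu=\big(\tfrac{i\sqrt{\mu_+}}{2\kappa_+}\big)^\nu e^{\kappa_+x\nu}$ needs care. I would fix principal branches and note that $\frac{i\sqrt{\mu_+}}{2\kappa_+}=-i\frac{\sqrt{\mu_+}}{2|\kappa_+|}$ has argument $-\pi/2$, while $\frac{2\kappa_+}{i\sqrt{\mu_+}}$ has argument $\pi/2$. The arguments add to $0$, so the product rule holds with no $e^{2\pi i\nu}$ factor. Moreover, $e^{\kappa_+x}>0$ is real, so it splits off without any branch issue. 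For $f^-_0$, $\kappa_->0$ and $x\to\minf$: the variable $\frac{i\sqrt{\mu_-}}{\kappa_-}e^{\kappa_-x}$ has argument $\pi/2$ and its partner has argument $-\pi/2$, so the same cancellation occurs. This yields $e^{-\kappa_-x\cdot iz/\kappa_-}=e^{-ixz}$, as required.
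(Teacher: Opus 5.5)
Your proposal is correct and follows essentially the same route as the paper: an exponential change of variables that reduces the equation on $[R_+,\pinf)$ to Bessel's equation, followed by normalization through the small-argument asymptotics $J_\nu(t)\sim (t/2)^\nu/\Gamma(\nu+1)$. The differences are minor. You substitute the complex variable $t=\frac{i\sqrt{\mu_+}}{\kappa_+}e^{\kappa_+x}$ directly and land on Bessel's equation, whereas the paper passes through the modified Bessel equation and the $I_\nu$--$J_\nu$ connection formula; you also state explicitly the role of the hypothesis $1+\nu\notin-\N_0$ and the branch bookkeeping for the complex powers, which the paper leaves implicit.
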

\begin{proof}
On $[R_+,\pinf)$, \eqref{Premiere} becomes
\begin{equation}
-y''(x) + \mu_+ e^{2\kappa_+x}y(x)=z^2y(x).
\label{Deuxi}
\end{equation}
We follow the sketch of proof presented in~\cite[Lemma 7]{BBD}. Applying the following changes of variables, $x(t) = \frac{1}{\kappa_+}\ln\left( \frac{t \kappa_+}{\sqrt{\mu_+}}\right)$ 
for $t \in \R_-$ and $v(t) = y(x(t))$, straightforward calculations show that $y'(x(t)) = t\kappa_+v'(t)$ and $y''(x(t)) = \kappa_+^2t (t v''(t) + v'(t))$. Then, by replacing these values in~\eqref{Deuxi}, $v$ satisfies
\[
t^2v''(t)+tv'(t)-\left(t^2 + \left( \frac{iz}{\kappa_+} \right)^2\right)v(t)=0.
\]
A solution of this second-order differential equation is a modified Bessel function $v(t) = I_{\frac{iz}{\kappa_+}}(t)$~\cite[Section 10.25 \unskip]{NIST:DLMF}. Using the link between Bessel and modified Bessel functions~\cite[10.27]{NIST:DLMF}, 
we have the following expression for $y$: 
\[
y(x) = J_{\frac{iz}{\kappa_+}}\left( \frac{i\sqrt{\mu_+}}{\kappa_+} e^{\kappa_+ x} \right).
\]
Since for $\zeta \to 0$ , $J_{\frac{iz}{\kappa_+}}(\zeta) \sim \frac{(\zeta/2)^{\frac{iz}{\kappa_+}}}{\Gamma\left(\frac{iz}{\kappa_+} +1\right)}$ \cite[(10.7.3)]{NIST:DLMF}, and taking $\zeta = \frac{i\sqrt{\mu_+}}{\kappa_+} e^{\kappa_+ x}$, with $x\rightarrow +\infty$, we see that $y$ needs to be multiplied 
by a factor $\Gamma\left(1+\frac{iz}{\kappa_+}\right) \left( \frac{2 \kappa_+}{{i\sqrt{\mu_+}}} \right)^{\frac{iz}{\kappa_+}}$
to satisfy the asymptotics~\eqref{Asymptotics}.

Similarly, working on $(\minf,R_-]$ leads to~\eqref{Eq:JostFuncBessel-} for $f^-_0$ .
\end{proof}
\begin{rem}[Poles]
    \label{rem:poles2}
    Since 
    $z \mapsto J_{\frac{iz}{\kappa_+}}\left(\frac{i\sqrt{\mu_+}}{\kappa_+} e^{\kappa_+x}\right)$ and $z \mapsto J_{\frac{-iz}{\kappa_-}}\left(\frac{i\sqrt{\mu_-}}{\kappa_-} e^{\kappa_- x}\right)$ are analytic in $\C$ for some $x \in \R$, the Jost solutions ${f}^+_0$ and ${f}^-_0$ respectively have possible poles for $z \in i\kappa_+\N$ and  $z \in -i\kappa_-\N$.
    With the same reasoning as in Remark~\ref{rem:poles1}, the associated Wronskian when the reference potential satisfies~\eqref{kappa+} and~\eqref{kappa-} has possible poles of multiplicity one in $(i\kappa_+\N) \cup (-i\kappa_-\N)$.
\end{rem}
\subsection{Reissner--Nordström--de Sitter black holes}

As already mentioned, Pöschl--Teller and exactly exponentially decaying potentials are toy models in the analysis of some black holes. In this section, we study the case of Reissner--Nordström--de Sitter black holes, and start with some reminders before providing the Jost solution expressions in Proposition~\ref{RNdSProp}, following~\cite{Hatsuda}

We consider a Reissner--Nordström--de Sitter black hole, \textit{i.e.} a spherical, massive, electrically charged black hole lying in a universe undergoing accelerated expansion. The associated metric function $F$ is given by
\begin{equation} \label{MetricFunctionF}
F(r) := 1 - \frac{2M}{r} + \frac{Q^2}{r^2} - \frac{\Lambda}{3}r^2, \quad 
r \neq 0,
\end{equation}
where $M>0$ is the mass of the black hole, $Q \in \R$ its electrical charge and $\Lambda >0$ the cosmological constant. Let us define the constants $\mathfrak{R}$, $D$, $m_1$, $m_2$, $M_1$ and $M_2$ by 
\begin{gather*}
    \mathfrak{R} := \frac{1}{\sqrt{2\Lambda}}, \quad D := 1-4Q^2\Lambda, 
    \quad m_1 := \mathfrak{R} \sqrt{1-\sqrt{D}}, 
    \quad m_2 := \mathfrak{R} \sqrt{1+\sqrt{D}}, \\
    M_1 := m_1-\frac{2}{3}\Lambda m_1^3, 
    \quad M_2 := m_2 -\frac{2}{3}\Lambda m_2^3.
\end{gather*}
Under the following assumptions on these constants detailed in \cite[Section 2]{Mokdad},
\begin{align}
    Q \neq 0, \quad  0 < \Lambda < \frac{1}{4Q^2}, \quad \text{and} \quad M_1 < M < M_2,
    \label{Mokdad}
\end{align}
one can prove that the function 
\begin{equation}\label{Eq:Delta(r)}
\Delta : r\in \R \mapsto \Delta(r) := r^2 F(r)
\end{equation}
has four real roots: $r_n < 0 < r_c < r_- < r_+$. The quantity $r_c$ is called the Cauchy horizon, $r_-$ is the event horizon and finally, $r_+$ is the cosmological horizon. Furthermore, on $(r_-,r_+)$, we have that $F >0$ \cite[Section 2]{Iantchenko}.
We then define $\kappa_*$ for $* \in \{n,c,-,+\}$ by 
\begin{equation} \label{kappaeq}
    \kappa_* = \frac{F'(r_*)}{2}.
\end{equation}
The parameters $\kappa_\pm$ are called surface gravities, and satisfy $\kappa_- >0$ and $\kappa_+ <0$. Note that they correspond to the parameters appearing in~\eqref{kappa+} and~\eqref{kappa-}. 

We now give the expression of the potential $V$ of~\eqref{Premiere} associated with Reissner--Nordström--de Sitter black holes~\cite[(7)]{Cardoso}, for $l \in \N_0$:
\begin{equation}
    \label{eq:pot_reissner}
    \forall x\in \R, \quad V(x) = F(r(x))\frac{l(l+1)}{r(x)^2} + F(r(x)) \frac{F'(r(x))}{r(x)} + \frac{2}{3}\Lambda F(r(x)),
\end{equation}
with $r(x)$ being the reciprocal function of 
\begin{equation}
    \label{x(r)}
x: r\in (r_-,r_+) \mapsto \frac{1}{2\kappa_n} \ln(r-r_n)+\frac{1}{2\kappa_c} \ln(r-r_c)+\frac{1}{2\kappa_-} \ln(r-r_-)+\frac{1}{2\kappa_+} \ln(r_+-r) \in \R.
\end{equation}
Note that the link between the Regge--Wheeler coordinate $x$ and the radial coordinate $r$ is given by $x'(r) = \frac{1}{F(r)}$~\cite[(2.3)]{Iantchenko}.

It is known that for a Reissner--Nordström--de Sitter black hole, on a neighbourhood of $\pinf$, the potential $V$ defined in~\eqref{eq:pot_reissner} can be written as a power series~\cite[Proposition A.1.]{Kehle} for some real numbers $C^+_m, \; m\ge 1$,
\begin{equation} \label{Vl+}
V(x) = \sum_{m=1}^{\pinf} C^+_m e^{2\kappa_+ mx}.
\end{equation}
Similarly, we have on a neighbourhood of $\minf$ that for some real numbers $C^-_m, \; m\ge 1$,
\begin{equation} \label{Vl-}
V(x) = \sum_{m=1}^{\pinf} C^-_m e^{2\kappa_- mx}.
\end{equation}

{We now provide some definitions of different equations which will be useful to derive Jost solutions for the potential $V$~\eqref{eq:pot_reissner}. We start by defining the Teukolsky equation~\cite[(2.11)]{Hatsuda}.
\begin{defi}
    The following ordinary differential equation, with $F(r)$ defined in~\eqref{MetricFunctionF} and $\Delta(r)$ defined in~\eqref{Eq:Delta(r)},
    \begin{equation}
    \left[ \frac{\textrm{d}}{\textrm{dr}} \Delta(r) \frac{\textrm{d}}{\textrm{dr}} + \frac{r^2z^2}{F(r)} - \frac{2\Lambda}{3}r^2- l(l+1)\right]R(r) = 0, \quad r \in (r_-,r_+),
    \label{Teukolsky}
\end{equation}
where $\Lambda>0$, $z\in \C$ and $l \in \N_0$ is called scalar Teukolsky equation associated with a Reissner--Nördstrom--de Sitter black hole. 
\end{defi}}
{We introduce now another variable $\zeta$ called Möbius transformation:
\[
\zeta(r) := \frac{(r_+ - r_c)(r-r_-)}{(r_+-r_-)(r-r_c)}.
\]
Through this change of variable, the five points $(r_+,r_-,r_c,r_n,\infty)$ are mapped as $(1,0,\infty,\zeta_r,\zeta_\infty)$ where $\zeta_r := \frac{(r_+ - r_c)(r_n-r_-)}{(r_+-r_-)(r_n-r_c)}$ and $\zeta_\infty := \frac{r_+ - r_c}{r_+-r_-}$.
Considering $r \in (r_-,r_+)$ is thus equivalent to $\zeta \in (0,1)$.
Then, we will give links between some solutions to~\eqref{Teukolsky} and to another equation: the Heun's equation.}
\begin{defi}{\cite[Section 31.3]{NIST:DLMF}}
    Heun's differential equation is the following ordinary differential equation:
    \begin{equation} \label{HeunDef}
    y''(\zeta) + \left( \frac{\gamma}{\zeta} + \frac{\delta}{\zeta-1} + \frac{\varepsilon}{\zeta-a} \right)y'(\zeta) + \frac{\alpha \beta \zeta -q}{\zeta(\zeta-1)(\zeta - a)}y(\zeta)=0,~\zeta \in (0,1),
    \end{equation}
    where the coefficients $\alpha,\beta,\gamma,\delta,\varepsilon \in \C$ satisfy the relation: $\alpha + \beta + 1 = \gamma + \delta + \varepsilon$ and $a \in \C$ satisfy $|a| \geq 1$ and $a \neq 1$.
    We denote by $\Hl(a,q;\alpha,\beta,\gamma,\delta;\zeta)$ the solution of \eqref{HeunDef} with exponent $0$ at $\zeta=0$ and such that $\Hl(a,q;\alpha,\beta,\gamma,\delta;0)=1$.
\end{defi}
Due to~\cite[(2.16)]{Hatsuda}, a solution to Teukolsky equation~\eqref{Teukolsky} $R^-(r,z)$ can be written as 
\begin{align}
& R^-(r,z) = \zeta(r)^{-\frac{iz}{2\kappa_-}}(\zeta(r)-1)^{\frac{iz}{2\kappa_+}}(\zeta(r)-\zeta_r)^{\frac{iz}{2\kappa_n}}(\zeta(r)-\zeta_\infty) \nonumber \\
& \times \Hl\left[\zeta_r,\left(\zeta_r\left(\frac{iz}{\kappa_+}+1\right)+\frac{iz}{\kappa_n}+1\right)\left(\frac{-iz}{\kappa_-}\right)-v(z);1 - \frac{iz}{\kappa_-},1 - \frac{iz}{\kappa_c} - \frac{iz}{\kappa_-},1-\frac{iz}{\kappa_-},1+\frac{iz}{\kappa_+};\zeta(r)\right],
\label{Eq:R-Heun}
\end{align}
where 
\[
v(z):= \frac{r_n}{r_c-r_n} + \frac{3l(l+1)+\Lambda r_-(r_- + r_+)}{\Lambda (r_c - r_n)(r_--r_+)} - \frac{6ir_-r_c z}{\Lambda(r_c-r_n)(r_c-r_-)(r_- - r_+)}.
\]
Similarly, another solution to~\eqref{Teukolsky} $R^+(r,z)$ has the following expression
\begin{align}
& R^+(r,z) = \zeta(r)^{\frac{iz}{\kappa_-}}(\zeta(r)-1)^{\frac{iz}{\kappa_+}}(\zeta(r)-\zeta_r)^{\frac{iz}{\kappa_n}}(\zeta(r)-\zeta_\infty) \nonumber \\
& \times \Hl\left[1-\zeta_r,1 - \frac{iz}{\kappa_c} + v(z); 1,1 - \frac{iz}{\kappa_c},1+\frac{iz}{\kappa_+},1+\frac{iz}{\kappa_-};1-\zeta(r)\right].
\label{Eq:R+Heun}
\end{align}
We finally state the Proposition which gives an explicit expression for the Jost solutions in the case of a potential given by~\eqref{eq:pot_reissner}.
\begin{prop}
\label{RNdSProp}
Let $V$ be a potential associated with a Reissner--Nordström--de Sitter black hole~\eqref{eq:pot_reissner}. Let $r(x)$ be defined from~\eqref{x(r)} 
 and let $R^-$, $R^+$ be solutions to the Teukolsky equation~\eqref{Teukolsky} defined in~\eqref{Eq:R-Heun} and~\eqref{Eq:R+Heun}. Let $z\in \C$ such that $\frac{iz}{\kappa_+} \in \C \backslash (-\N)$ and $\frac{-iz}{\kappa_-} \in \C \backslash (-\N)$. Then the two Jost solutions associated with \eqref{Premiere} are given by the following formulas for $r \in (r_-,r_+)$:
\begin{equation}
\label{RNdSf-}
f^-_0(x(r),z) := \frac{rR^-(r,z)(r_- - r_n)^{-\frac{iz}{2\kappa_n}}(r_- - r_c)^{-\frac{iz}{2\kappa_c}}(r_+ - r_-)^{-\frac{iz}{2\kappa_+}} (r_+-r_c)^{\frac{iz}{2\kappa_-}}}{r_-(-1)^{\frac{iz}{2\kappa_+}}(-\zeta_r)^{\frac{iz}{2\kappa_n}}(-\zeta_{\infty}) (r_- - r_c)^{\frac{iz}{2\kappa_-}} (r_+ - r_-)^{\frac{iz}{2\kappa_-}}}
\end{equation}
and
\begin{equation}
\label{RNdSf+}
f^+_0(x(r),z) := \frac{rR^+(r,z)(r_+ - r_n)^{\frac{iz}{2\kappa_n}}(r_+ - r_c)^{\frac{iz}{2\kappa_c}}(r_+ - r_-)^{\frac{iz}{2\kappa_-}} (r_+-r_c)^{\frac{iz}{2\kappa_+}}(r_+-r_-)^{\frac{iz}{2\kappa_+}}}{r_+(1-\zeta_r)^{\frac{iz}{2\kappa_n}}(1-\zeta_{\infty})(r_c-r_-)^{\frac{iz}{2\kappa_+}}},
\end{equation}
{where $x(r)$ is defined in~\eqref{x(r)}.}
\end{prop}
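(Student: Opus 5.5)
The plan has three steps. First, show that $u(x):=r(x)R(r(x),z)$ solves \eqref{Premiere} whenever $R$ solves \eqref{Teukolsky}. Second, show that the two prescribed products behave like $e^{\mp ixz}$ at the correct ends. Third, invoke uniqueness of Jost solutions.

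For the first step, write $y(x)=r(x)R(r(x))$ and use $\frac{\mathrm{d}r}{\mathrm{d}x}=F(r)$, which follows from $x'(r)=1/F(r)$. A direct computation gives
\[
y''=F\frac{\mathrm{d}}{\mathrm{d}r}\Bigl(F\,\frac{\mathrm{d}}{\mathrm{d}r}(rR)\Bigr).
\]
Expanding, and using $\Delta=r^2F$, this equals
\[
\frac{F}{r}\Bigl[(\Delta R')' - rF'R\Bigr].
\]
Substituting \eqref{Teukolsky} for $(\Delta R')'$ then yields
\[
-y''+\Bigl(F\tfrac{l(l+1)}{r^2}+F\tfrac{F'}{r}+\tfrac{2}{3}\Lambda F\Bigr)y=z^2y.
\]
The bracket is exactly $V$ in \eqref{eq:pot_reissner}. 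Multiplying by the $r$-independent constants in \eqref{RNdSf-} and \eqref{RNdSf+} preserves the equation. Hence both formulas define solutions of \eqref{Premiere} on $\R$, since $r\mapsto x(r)$ is a bijection $(r_-,r_+)\to\R$.

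For the second step, consider $x\to-\infty$, i.e. $r\to r_-$ and $\zeta\to0$. The Heun factor in \eqref{Eq:R-Heun} tends to $\Hl(\dots;0)=1$, and it is analytic there provided $\gamma=1-\frac{iz}{\kappa_-}\notin-\N_0$; this is what the hypothesis on $z$ guarantees. The prefactors tend to explicit constants:
\begin{itemize}
\item $(\zeta-1)^{\frac{iz}{2\kappa_+}}\to(-1)^{\frac{iz}{2\kappa_+}}$,
\item $(\zeta-\zeta_r)^{\frac{iz}{2\kappa_n}}\to(-\zeta_r)^{\frac{iz}{2\kappa_n}}$,
\item $\zeta-\zeta_\infty\to-\zeta_\infty$,
\item $r\to r_-$.
\end{itemize}
These are exactly the factors in the denominator of \eqref{RNdSf-}. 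The remaining factor $\zeta^{-\frac{iz}{2\kappa_-}}$ must match $e^{-ixz}$. Exponentiating \eqref{x(r)} gives
\[
e^{2\kappa_- x}=(r-r_-)\,(r-r_n)^{\kappa_-/\kappa_n}(r-r_c)^{\kappa_-/\kappa_c}(r_+-r)^{\kappa_-/\kappa_+},
\]
and near $r_-$ we have $\zeta\sim\frac{(r_+-r_c)(r-r_-)}{(r_+-r_-)(r_--r_c)}$. Eliminating $r-r_-$ expresses $\zeta^{-\frac{iz}{2\kappa_-}}$ as $e^{-ixz}$ times powers of $r_--r_n$, $r_--r_c$, $r_+-r_-$ and $r_+-r_c$, up to $o(1)$. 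These powers are the numerator factors in \eqref{RNdSf-} and the $\kappa_-$-powers in its denominator. The case $x\to+\infty$ is the same computation for \eqref{Eq:R+Heun}: there $r\to r_+$, $1-\zeta\to0$, the Heun factor tends to $1$ provided $1+\frac{iz}{\kappa_+}\notin-\N_0$, and one compares with $e^{2\kappa_+x}\propto(r_+-r)$. To also control the derivative, I would note that the correction terms are analytic in $(r-r_\mp)\propto e^{2\kappa_\mp x}$, hence $O(e^{2\kappa_\mp x})$ and differentiable term by term.

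For the third step, because $V$ decays exponentially at both ends by \eqref{Vl+} and \eqref{Vl-}, the Jost solution is unique wherever it exists. The two functions therefore coincide with $f^\mp_0$ for $\Im z\ge0$, and by analytic continuation for all admissible $z$.

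The main obstacle is the constant bookkeeping in the second step. The exponent $\frac{iz}{\kappa_-}$ in \eqref{Eq:R+Heun}, versus the halved exponents in \eqref{Eq:R-Heun}, suggests that the convention of \cite{Hatsuda} must be tracked carefully, including branches of the complex powers such as $(-1)^{\cdot}$. I would first verify the $\zeta^{\cdot}$ and $(\zeta-1)^{\cdot}$ exponents against the local Frobenius exponents $\pm\frac{iz}{2\kappa_\mp}$ of Heun's equation. I would then match constants factor by factor, with fixed principal branches.
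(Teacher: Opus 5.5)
Your proposal is correct and follows the paper's proof: the substitution $y=rR$ with $\mathrm{d}r/\mathrm{d}x=F$ links \eqref{Teukolsky} and \eqref{Premiere}, and you run this computation in the reverse, logically needed, direction. As in the paper, the Heun factor tends to $1$, the prefactors tend to the constants in the denominator of \eqref{RNdSf-}, and $\zeta^{-\frac{iz}{2\kappa_-}}$ is matched with $e^{-ixz}$ by exponentiating \eqref{x(r)}. Your closing step is a worthwhile addition the paper passes over: you invoke uniqueness only for $\Im z\geq 0$ and then continue analytically, which is needed because for $\Im z<0$ the relation $f\sim e^{-ixz}$ alone does not determine $f$ (any multiple of the subdominant solution can be added).

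Two small remarks. First, your expansion has a sign slip: it should read $\frac{F}{r}\bigl[(\Delta R')'+rF'R\bigr]$, since your minus sign would produce $-FF'/r$ in the potential instead of $+FF'/r$. Second, your suspicion about \eqref{Eq:R+Heun} is justified. Its Heun parameters force the prefactor exponents to be $\frac{iz}{2\kappa_-}$, $\frac{iz}{2\kappa_+}$, $\frac{iz}{2\kappa_n}$, whereas the printed unhalved exponents do not give a solution of \eqref{Teukolsky}. The halved version is also the one the constants in \eqref{RNdSf+} presuppose.
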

\begin{proof}
We first link the solutions to the Schrödinger equation~\eqref{Premiere} to solutions to Teukolsky equation~\eqref{Teukolsky}. Then, we link solutions to Teukolsky equation to solutions to Heun's equation~\eqref{HeunDef}. Using asymptotics, we choose a particular Heun's function and finally we check that these expressions are the wanted Jost solutions.

We start from $y(x)$ a solution to the Schrödinger equation~\eqref{Premiere} for $V$ defined in~\eqref{eq:pot_reissner}. Then, we consider $\tilde{y}(r) = y(x(r))$ for $r \in (r_-,r_+)$. So $\tilde{y}'(r) = x'(r) y'(x(r)) = \frac{1}{F(r)} y'(x(r))$ and $\tilde{y}''(r) = \frac{-F'(r)}{F(r)^2}y'(x(r)) + \frac{1}{F(r)^2}y''(x(r))$, and so $y''(x(r))=F(r)^2 \tilde{y}''(r) + F(r)F'(r)\tilde{y}'(r)$. Substituting in~\eqref{Premiere} we obtain that $\tilde{y}$ satisfies
\begin{equation}
\label{eq:ytilde}
    -\tilde{y}''(r)F(r)^2 - F'(r)F(r)\tilde{y}'(r) + \tilde{V}(r)\tilde{y}(r) = z^2\tilde{y}(r), \quad r \in (r_-,r_+),
\end{equation}
where $\tilde{V}(r) := F(r)\frac{l(l+1)}{r^2} + F(r) \frac{F'(r)}{r} + \frac{2}{3}\Lambda F(r)$.

Let us now consider the change of variables $R(r) = \frac{\tilde{y}(r)}{r}$ where $r \in (r_-,r_+)$.
We have that $\tilde{y}'(r) = R(r) + rR'(r)$ and $\tilde{y}''(r) = 2R'(r) + rR''(r)$.
By substituting in~\eqref{eq:ytilde}, we obtain
\[
(2R'(r) + rR''(r))F(r)^2 + F'(r)F(r)(R(r) + rR'(r)) - \tilde{V}(r)rR(r) + z^2rR(r) = 0. 
\]
Using that $\tilde{V}(r)rR(r) = F(r)\left( \frac{l(l+1)}{r} + F'(r) + \frac{2}{3}\Lambda r\right) R(r)$, we have
\[
\begin{split}
& (2R'(r) + rR''(r))F(r)^2 + F'(r)F(r)rR'(r) - F(r)\left( \frac{l(l+1)}{r} + \frac{2}{3}\Lambda r\right) R(r) + z^2rR(r) = 0 \\
\end{split}
\]
Since $0<r_-<r$ and $F>0$ on $(r_-,r_+)$, we have factorising the previous equation by $F(r)/r$
\begin{equation}
\label{eq:AlmostTeukolsky}
 2rF(r)R'(r) + r^2F(r)R''(r) + r^2 F'(r) R'(r) + \left(\frac{r^2z^2}{F(r)} - \frac{2}{3}\Lambda r^2 - l(l+1) \right) R(r) = 0.   
\end{equation}
Using $\Delta(r)$ defined in~\eqref{Eq:Delta(r)}, the following equality holds:
\[
\frac{\textrm{d}}{\textrm{dr}} \left[ \Delta(r) R'(r)\right] = 2rF(r)R'(r)+ r^2F(r)R''(r) + r^2F'(r)R'(r).
\]
So $\frac{\textrm{d}}{\textrm{dr}} \left[ \Delta(r) R'(r)\right]$ corresponds exactly to the first three terms of~\eqref{eq:AlmostTeukolsky}.
Then $R$ is solution to Teukolsky equation~\eqref{Teukolsky}. 
Therefore the Jost solution to equation~\eqref{Premiere} with $V$ defined in~\eqref{eq:pot_reissner} equivalent to $\exp(-izx)$ at $-\infty$  is proportional to $r(x) R^-(r(x),z)$ provided that we can find a multiplicative factor leading to the correct asymptotics.

First we have~\cite[Section 3.1]{Hatsuda}
\[
\begin{split}
\Hl\left[\zeta_r,\left(\zeta_r\left(\frac{iz}{\kappa_+}+1\right)+\frac{iz}{\kappa_n}+1\right)\left(\frac{-iz}{\kappa_-}\right)-v(z);1 - \frac{iz}{\kappa_-},1 - \frac{iz}{\kappa_c} - \frac{iz}{\kappa_-},1-\frac{iz}{\kappa_-},1+\frac{iz}{\kappa_+};\zeta(r)\right] \equ{r \to r_-} 1,
\end{split}
\]
and since $\zeta(r) \rightarrow_{r \to r_-} 0$, we have
\begin{equation} \label{Eq:SimZeta0}
\zeta(r)^{-\frac{iz}{2\kappa_-}}(\zeta(r)-1)^{\frac{iz}{2\kappa_+}}(\zeta(r)-\zeta_r)^{\frac{iz}{2\kappa_n}}(\zeta(r)-\zeta_\infty) \equ{r \to r_-} \zeta(r)^{-\frac{iz}{2\kappa_-}}(-1)^{\frac{iz}{2\kappa_+}}(-\zeta_r)^{\frac{iz}{2\kappa_n}}(-\zeta_\infty),
\end{equation}
so that
\begin{align} \label{Eq:Simy2-}
r R^-(r,z) &\equ{r \to r_-} 
r_- \; \zeta(r)^{-\frac{iz}{2\kappa_-}}(-1)^{\frac{iz}{2\kappa_+}}(-\zeta_r)^{\frac{iz}{2\kappa_n}}(-\zeta_\infty) \\
& \equ{r \to r_-} (r-r_-)^{-\frac{iz}{2\kappa_-}} r_- \left(\frac{(r_+-r_-)(r_--r_c)}{(r_+ - r_c)}\right)^{\frac{iz}{2\kappa_-}}  (-1)^{\frac{iz}{2\kappa_+}}(-\zeta_r)^{\frac{iz}{2\kappa_n}}(-\zeta_\infty).
\end{align}
Second
\[
 e^{-ix(r)z} \equ{r \to r_{-}}  (r-r_-)^{-\frac{iz}{2\kappa_-}}(r_--r_n)^{-\frac{iz}{2\kappa_n}}(r_- - r_c)^{-\frac{iz}{2\kappa_c}}(r_+-r_-)^{-\frac{iz}{2\kappa_+}}.
\]
Hence 
\[
\tilde{f}^-_0(r,z):= \frac{rR^-(r,z)(r_- - r_n)^{-\frac{iz}{2\kappa_n}}(r_- - r_c)^{-\frac{iz}{2\kappa_c}}(r_+ - r_-)^{-\frac{iz}{2\kappa_+}} (r_+-r_c)^{\frac{iz}{2\kappa_-}}}{r_-(-1)^{\frac{iz}{2\kappa_+}}(-\zeta_r)^{\frac{iz}{2\kappa_n}}(-\zeta_{\infty}) (r_- - r_c)^{\frac{iz}{2\kappa_-}} (r_+ - r_-)^{\frac{iz}{2\kappa_-}}}
\equ{r \to r_-} e^{-ix(r)z}.
\]
Using that $\tilde{f}^-_0(r,z) = f^-_0(r(x),z)$,
we obtain $f^-_0(r(x),z)$ defined in~\eqref{RNdSf-} is a Jost solution.
Finally, a similar results for $\tilde{f}_0^+(r,z)$ gives in the end~\eqref{RNdSf+}.
\end{proof}
\begin{rem}[Poles]
    \label{rem:poles3}
Due to the fact that $\Hl(a,q;\alpha,\beta,\gamma,\delta;\zeta)$ has poles for $\gamma \in \C \backslash \N_0$ \cite[31.3(i)]{NIST:DLMF}, $f^-_0$ (respectively $f^+_0$) is not defined for $z \in  -i\kappa_-\N$ (respectively $z=i\kappa_+\N$). With the same reasoning as in Remark~\ref{rem:poles1}, the Wronskian when the reference potential is given by~\eqref{eq:pot_reissner} has possible poles in $(i\kappa_+\N)\cup(-i\kappa_-\N)$.
\end{rem}

Numerically, we rather work in this case with the variable $r$, and so with Jost solutions $\tilde{f}^\pm_0$, noting that the quantity $r(x)$ has no closed-form formula, so that working in variable $x$ could introduce numerical errors and be time-consuming. For this reason, and denoting $r_\alpha = r(\alpha)$ and $r_\beta = r(\beta)$ the Cauchy problem~\eqref{CP2} is replaced {by} the slightly different problem
\[
\left\{
\begin{array}{l}
    -\tilde{y}''(r) - \frac{F'(r)}{F(r)}\tilde{y}'(r) + \frac{\tilde{V}(r)}{F(r)^2}\tilde{y}(r) = \frac{z^2}{F(r)^2}\tilde{y}(r), \quad r \in (r_\alpha,r_\beta), \\
\tilde{y}(r_\alpha) = \tilde{f}^-_0(r_\alpha),z)\\
\tilde{y}'(r_\alpha) = (\tilde{f}^-_0)'(r_\alpha),z).
\end{array}
\right.
\]
Then, the Wronskian $\tilde{w}(r,z) := [\tilde{f}^-_0(r,z),\tilde{f}_0^+(r,z)]$ depends on the variable $r$. Due to the relation between $f^\pm_0$ and $\tilde{f}^\pm_0$, we use the next formula to compute the wanted Wronskian $w(z) = [f^-_0(x,z),f^+_0(x,z)]$. Note that 
\[
\tilde{w}(r,z) = x'(r)[f^-_0(x,z),f^+_0(x,z)] = \frac{1}{F(r)}w(z)
\]
and so
\[
w(z) = \tilde{w}(r,z)F(r).
\]

\section{Numerical Results}
\label{Sec3}

In this section, we present numerical results showing how the deflated Newton's method combined with well-chosen reference potentials provides accurate resonances. In Subsection~\ref{Benchmark}, we 
show how the method behaves when modifying discretization parameters and reference potentials.
In the subsequent sections, we study the resonances of perturbations of three types of reference potentials: Poschl--Teller potentials, exponentially decaying potentials, and Reissner--Nordström--de Sitter black holes.
The Julia code for reproducing the figures can be found at~\cite{Zenodo}. 
To give a few computational details, the resolution of Cauchy problems is performed with the package DifferentialEquations~\cite{DiffEq}. 
In Algorithm~\ref{Algo1}, the derivative of $M$ is obtained with automatic differentiation using ForwardDiff~\cite{ForwardDiff}. 
Due to a compatibility issues between the packages DifferentialEquations and ForwardDiff,
the function $w'$ used in the Newton's deflated method is computed with a fourth-order centered finite difference scheme. 
The contour integral is computed numerically using QuadGK~\cite{QuadGK}. The hypergeometric and Bessel functions are available in SpecialFunctions.
Finally Heun's functions were not available in Julia, therefore we translated to Julia the MATLAB code developed by Motygin~\cite{Motygin} for these simulations.
Finally empirical considerations led us to choose a maximum of $200$ iterations for each Newton algorithm.

\subsection{Benchmark of the method}
\label{Benchmark}

\subsubsection{Varying resolution parameters}

Unless explicitly stated otherwise, the reference potential $V_0$ considered in this section is the Pöschl--Teller potential with $\lambda= 1$. The interval $[\alpha,\beta]$ is typically taken as $[-1,5]$, and we locate resonances within a rectangle similar to Figure~\ref{fig:Rectangle}. 
First we observed numerically that changing the 
interval $[\alpha,\beta]$ to solve the Cauchy problem~\eqref{CP1} without changing $q$ only changes the values of the resonances by at most $10^{-9}$ from $[-1,5]$ to $[-6,10]$, for a perturbation with support in $[-1,5]$.

Second the numerical method chosen to compute the Jost solution does not impact the results.
 We indeed computed the supremum of the difference between the reference case given by Verner’s 9th-order method that we used in the following test cases, and other methods 
among which
Verner’s methods of lower order, the Tsitouras method, the Dormand–Prince methods of different orders, and the classical Runge–Kutta method. We observed a maximum difference on the resonances of $10^{-12}$ which is indeed negligible.

Third we discuss the impact of the numerical precision of the Jost solutions on the accuracy of resonances. Recall that one of the Jost solutions is obtained by solving a Cauchy problem, after which we compute the Wronskian and determine its zeros. 
The considered perturbation is $q = \indic{[-1,5]}$.
We observe in Figure~\ref{fig:PreciJostSol} that for a tolerance on the Jost solutions smaller than $10^{-9}$, the supremum of the distance between the resonances is about $10^{-11}$.
We therefore chose relative precision parameter of $10^{-10}$ for the Jost solutions in the following.

\begin{figure}[htb!]
    \centering
    \includegraphics[width = 8cm]{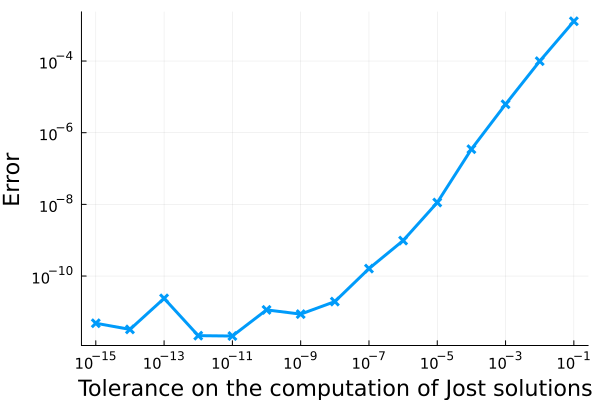}
    \captionsetup{justification=centering}
    \caption{Supremum of the difference between resonances obtained for varying tolerances on Jost solutions and a reference solution computed with a tolerance of $10^{-16}$ on the Jost solutions.}
    \label{fig:PreciJostSol}
\end{figure}

\subsubsection{Importance of the choice of the reference potential}
\label{ImpChoicePot}

We now compare two approaches which should naively give the same resonances in the limit of large support $[-L,L]$, with $L \to \pinf$. The first one consists in considering $V_0$ as a potential and no perturbation (i.e. $q=0$).  In the second one, we start with the null reference potential and consider a compactly supported perturbation of this null potential, that is $q = V_0 \indic{[-L,L]}$. For this example, we took $V_0 = \frac{1}{\cosh^2(\cdot)}$ a Pöschl–Teller potential, since in this case the location of resonances is theoretically known.
According to \cite[Theorem 4]{VA}, in the case $\lambda = 0$, or~\cite[Theorem 6]{Zworski87} resonances $z_{\pm j}$ have the following asymptotics:
\[
    \begin{split}
    z_{\pm j} & = \pm \frac{\pi}{4L} \left( 2j + 2 \pm 1 \right)  - \frac{i}{L} \log \left( \frac{j \pi}{2L}\right) + \frac{i}{4L} \log(\cosh^{-4}(L)) +o(1),
    \end{split}
\]
    as $j \in \N$ tends to infinity and $o(1)$ stands for $j \to \pinf$.
\begin{figure}[htb!]
    \centering
    \includegraphics[width = 8cm]{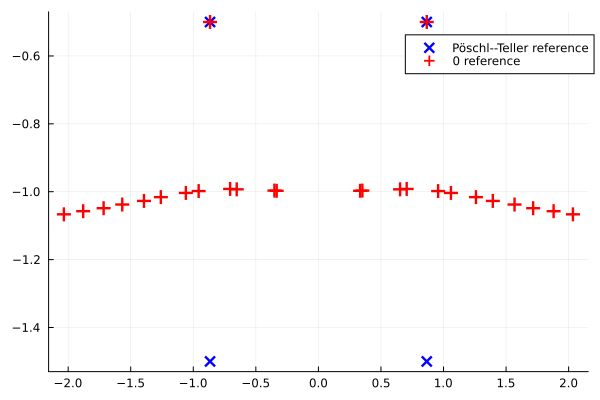}
    \captionsetup{justification=centering}
    \caption{Resonances for different reference potentials.}
    \label{ImpRefPot}
\end{figure}

For $L$ large enough (for Figure \ref{ImpRefPot} we chose $L = 10$), we have 
\[
\frac{i}{4L} \log(\cosh^{-4}(L))= \frac{i}{4L}\log(e^{-4L})(1+o(1)) = -i(1+o(1)).
\]
So the presence of the quantity $\frac{i}{4L} \log(\cosh^{-4}(L))$ explains that, for a potential which decays more slowly than super-exponentially such as Pöschl--Teller potential, we observe in Figure \ref{ImpRefPot} the appearance of preasymptotic lines of resonances instead of logarithmic branches.
We emphasize here that this preasymptotic lines are independent of the numerical method chosen to compute the resonances, including those presented in \cite{BZ} or \cite{Levitt}, but depend on the truncation of the potential. 

This phenomenon is a clear motivation for our choice to start with a carefully chosen reference potential $V_0 \neq 0$
and to consider a compactly supported perturbation $q$, rather than immediately considering a perturbation of the free potential in the form $(V_0 + q)\indic{[-L,L]}$ for $L$ large enough. 

\subsection{Location of resonances (Pöschl--Teller potentials)}
\label{LocRes}

In this section, we compute and analyze some phenomena related to resonances of perturbations of 
Pöschl--Teller potentials, and compare them to existing theoretical results.
Unless otherwise stated, the coefficient for the Pöschl--Teller potential is $\lambda=1$.

\subsubsection{Different compactly supported perturbations}
\label{DiffCompactSuppPerturb}

To start with we study how different are the resonances of perturbations of the same Pöschl--Teller reference potential, with perturbations having the same support and same values at the boundary of this support. 
In theory~\cite[Theorem 4]{VA}, the resonances should asymptotically lie on two logarithmic branches
independent of 
the values of the perturbation within the support, and only depending on the values of the perturbation at the border of its support.
To check this we choose two perturbation potentials $q_1$ and $q_2$ defined by 
\begin{equation} \label{Indic}
q_1(x) = \left\{
    \begin{array}{ll}
         -1 & \mbox{if } -\pi\leq x \leq {\pi}\\
       0 & \mbox{elsewhere}
    \end{array}
\right., \quad
\text{and}
\quad
q_2(x) = \left\{
    \begin{array}{ll}
         \cos(x) & \mbox{if } -\pi\leq x \leq \pi\\
       0 & \mbox{elsewhere}.
    \end{array}
\right.
\end{equation}
\begin{figure}[htb!]
    \centering
    \includegraphics[width = 8cm]{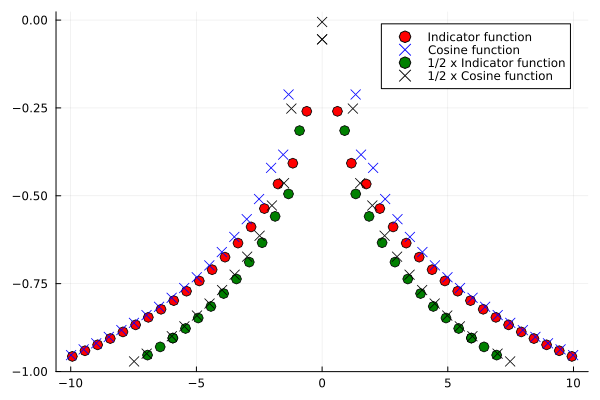}
    \captionsetup{justification=centering}
    \caption{Location of resonances for different perturbations of the same Pöschl--Teller potential.}
    \label{LocResDiffPot}
\end{figure}
In Figure \ref{LocResDiffPot}, we observe that the two logarithmic lines of resonances for $q_1$ and $q_2$ (respectively $1/2q_1$ and $1/2q_2$) seem to merge as the modulus of the resonances increases. This is consistent with the results presented in \cite[Theorem 4]{VA},
 where the logarithmic lines do not depend on the value of the perturbation within the support but only at the jump discontinuities.
We also observe the appearance of resonances close to the real axis in the case $1/2q_2$, which do not seem to correspond to any resonances in the case $q_2$. This phenomenon is due to the fact that, for the perturbation $q_2$, the potential presents a small well. This leads to the formation of bound states, i.e. eigenvalues. When we then consider $\tfrac12 q_2$ instead of $q_2$, the depth of this well is reduced, which decreases the capacity of the potential to create bound states and so transforms the former eigenvalues into resonances. This phenomenon is e.g. shown in~\cite[Figure 3]{Belchev}.

\subsubsection{Stability of small resonances}
\label{StabInstab}

We then study the stability of small resonances, i.e. resonances closer to the imaginary axis.
In particular, we observe how the resonances of an unperturbed  Pöschl--Teller potential are modified by adding a real-valued, integrable and compactly supported potential $q$ of varying amplitude.
Namely, we take $q = \indic{[-1,5]}$ and we consider potentials for $\tau \in [0,1]$
\begin{equation}
\label{eq:Vtau1}
    V_\tau^1: x\in \R\mapsto V_\tau^1(x) := \frac{\lambda}{\cosh^2(x)} + \tau q(x).
\end{equation}
\begin{figure}[htb!]
    \centering
    \begin{subfigure}[t]{0.48\textwidth}
        \includegraphics[width=\linewidth]{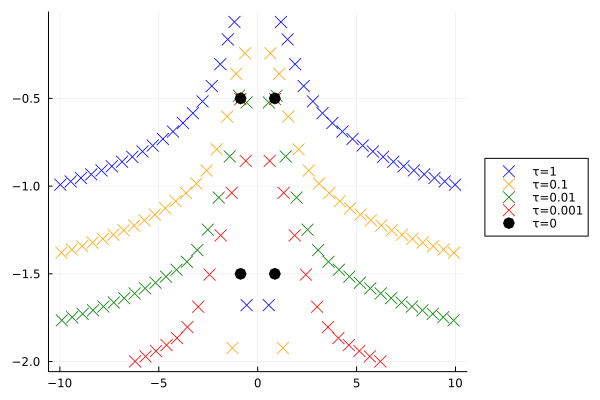}
        \captionsetup{justification=centering}
        \caption{Resonances for different values of the parameter $\tau$ for the potential $V^1_{\tau}$
        }
        \label{InstRes}
    \end{subfigure}
    \hfill
    \begin{subfigure}[t]{0.48\textwidth}
        \includegraphics[width=\linewidth]{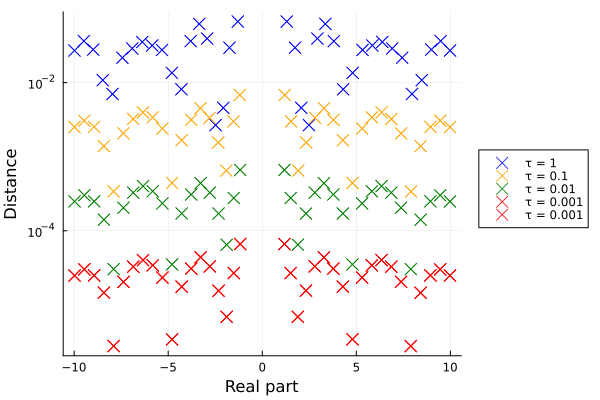}
        \captionsetup{justification=centering}
        \caption{Distance between resonances obtained for the reference case $V_0^2$ and resonances obtained for any $\tau$}
        \label{ImagPart}
    \end{subfigure}
    \caption{Resonances for different values of $\tau \in [0,1]$ and with two reference potentials $V^1_\tau$ and $V^2_\tau$.}
\end{figure}
We plot in Figure~\ref{InstRes} the resonances for several values of $\tau$ on the domain $\{ z = x - iy ~|~ x \in [-10,10] ~\text{and}~ y \in (0,2) \}$. Since Pöschl--Teller resonances are located 
on two lines parallel to the imaginary axis, and perturbations thereof on two logarithmic lines~\cite[Theorem 4]{VA}, the large resonances are necessarily unstable. However, we notice that, 
the closer the resonances are to the imaginary axis, the more stable they are. This confirms the results presented in~\cite[Figure 7]{JLJ}, obtained with a pseudospectrum method. 
When the parameter $\tau$ is smaller than $10^{-7}$ we observe that there are only four resonances in the considered domain. We compute these four resonances for different values of $\tau \in [0,10^{-7}]$ and then plot the maximum error with respect to the exact ones for $\tau = 0$ in Figure~\ref{StabTau} (in blue). 
The error in log-log scale follows a straight line of slope $1.00$ 
hence
the distances between two corresponding resonances are proportional to the perturbation parameter $\tau$, indicating the stability of the resonances. 

We then study the stability of resonances for a perturbation of an already perturbed Pöschl--Teller potential that is we take $q_1 = \indic{[-1,5]}$ and $q_2 = \indic{[-0.5,2.5]}$, and consider, for $\tau\in[0,1]$,
\begin{equation}
\label{eq:Vtau2}
    V_\tau^2: x\in \R \mapsto V_\tau^2(x) := \frac{\lambda}{\cosh^2(x)} + q_1 + \tau q_2(x),
\end{equation}
Since all these resonances share almost the same real part, we plot the distance between the imaginary parts of the resonances for different values of $\tau$ in Figure~\ref{ImagPart}.
The error on the resonances systematically decreases and seems of the order of the perturbation, suggesting stability of all resonances.
In order to gain a better understanding of Figure~\ref{ImagPart}, especially what happens for smaller values of $\tau$, we also plot in Figure~\ref{StabTau} the supremum of the distance between resonances for $\tau = 0$ and any $\tau \in [0,1]$ (in red). As before, the 
points for $\tau \geq 10^{-12}$ seem to lie on a straight line with slope $1.00$ indicating that the differences between two resonances are proportional to the parameter $\tau$. As expected, we observe that the distances are way lower for the second potential~\eqref{eq:Vtau2} than for the first one~\eqref{eq:Vtau1}.
\begin{figure}[H]
    \centering
    \includegraphics[width = 8cm]{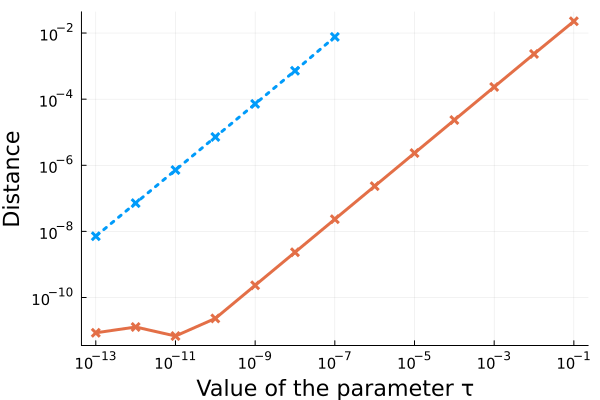}
    \captionsetup{justification=centering}
    \caption{Supremum of the distance between resonances obtained for potentials $V^1_{\tau}$ (dotted line) and potentials $V^2_{\tau}$ (continuous line) for different perturbation parameters $\tau$.
    }
    \label{StabTau}
\end{figure}
\subsubsection{Increasing values of the Pöschl--Teller parameter}
\label{SectionLambda}

Let us recall that our original motivation for the study of~\eqref{Premiere} was to understand the behavior of small resonances for black holes. 
In a three-dimensional infinite hyperbolic cylinder (see~\cite[Section 2]{VA} for more details), and assuming that the considered perturbation is radial, after a suitable change of variable, the problem of interest is transformed into a sequence of one-dimensional problems involving Pöschl--Teller potentials of the form
\begin{equation}
    -y''(x) + \left( \frac{\lambda_k}{\cosh^2(x)} + q(x) \right) y(x) = z^2 y(x), \quad x\in \R,
    \label{Eq:Sch3d}
\end{equation}
with $\lambda_k := k(k+1)$, for $k\in\N_0$, and the resonances for the three-dimensional problem corresponds to the union of the resonances generated by~\eqref{Eq:Sch3d} for $k \in \N_0$.
\begin{figure}[htb!]
    \centering
    \includegraphics[width = 8.5cm]{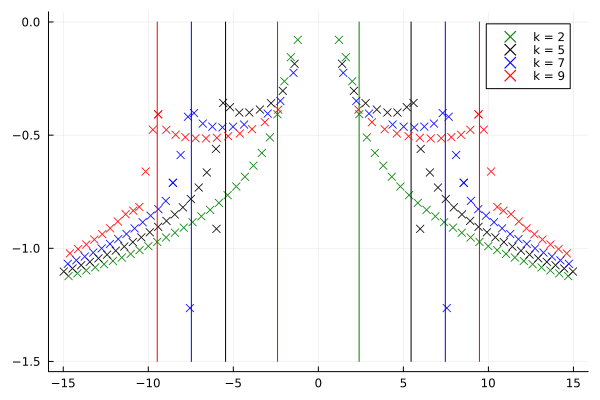}
    \captionsetup{justification=centering}
    \caption{Location of resonances for different values of the parameter $k$.}
    \label{DiffValuek}
\end{figure}

In Figure \ref{DiffValuek}, we observe a jump in the resonances which are at first not located on logarithmic branches, and slightly increase in imaginary part, 
before all diving on the two logarithmic branches
independent from the parameter $\lambda_k$ 
as predicted theoretically~\cite{VA}.
An interpretation of the appearance of these preasymptotic behavior lines is the fact that the Pöschl–Teller potential is more influential than the perturbation in the behavior of small resonances. Furthermore, this phenomenon becomes more pronounced as the parameter $k$ increases.
{The abscissas where the resonances join logarithmic branches, denoted with colored vertical lines in Figure~\ref{DiffValuek}, correspond to values $\pm \sqrt{\lambda_k - \frac{1}{4}}$, \textit{i.e.} the real part of resonances for an unperturbed Pöschl--Teller potential with a constant $\lambda_k$~\cite[Figure 1]{VA}.}
\subsubsection{Comparison with exactly exponentially decaying reference potentials}
We now compare the resonances of a perturbed Pöschl--Teller potential and a perturbed exactly exponentially decaying potentials as defined in~\eqref{kappa+} and~\eqref{kappa-}, {where the two perturbations are $q = \indic{[-1,5]}$}.
To do so, we approximate the Pöschl--Teller potential ($\lambda=1$) by an exponentially decaying potential in the following manner: we 
choose $R_-=-1$, $R_+=5$, we use the Pöschl--Teller potential in $[R_-,R_+]$, and we then choose $\kappa_-=-1,\kappa_+ =1$ following the asymptotic behavior of the Pöschl--Teller potential and choose $\mu-=\mu_+=4$ so that the corresponding potential is continuous.

As expected we observe two logarithmic branches of the resonances which seem to be the same. Indeed, we plot in Figure~\ref{fig:Fig9} the difference between resonances as a function of the real part of the resonances, and observe that the difference goes to zero.
Thus Pöschl–Teller potentials can be regarded as a good (and simple) approximation of exactly exponentially decaying potentials, especially when $\mu_-=\mu_+$.
\begin{figure}[H]
    \centering
    \includegraphics[width = 8cm]{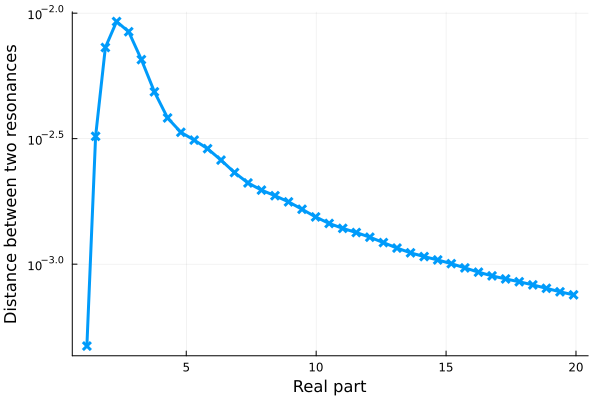}
    \captionsetup{justification=centering}
    \caption{Distance between resonances for exactly exponentially decaying and Pöschl--Teller potentials.}
    \label{fig:Fig9}
\end{figure}

\subsection{Resonances for Reissner--Nordström--de Sitter black holes}

We now turn to the computation of resonances for Reissner--Nordström--de Sitter black holes and perturbation thereof.

\subsubsection{Unperturbed black holes}
\begin{figure}[htb!]
    \centering
    \includegraphics[width = 13cm]{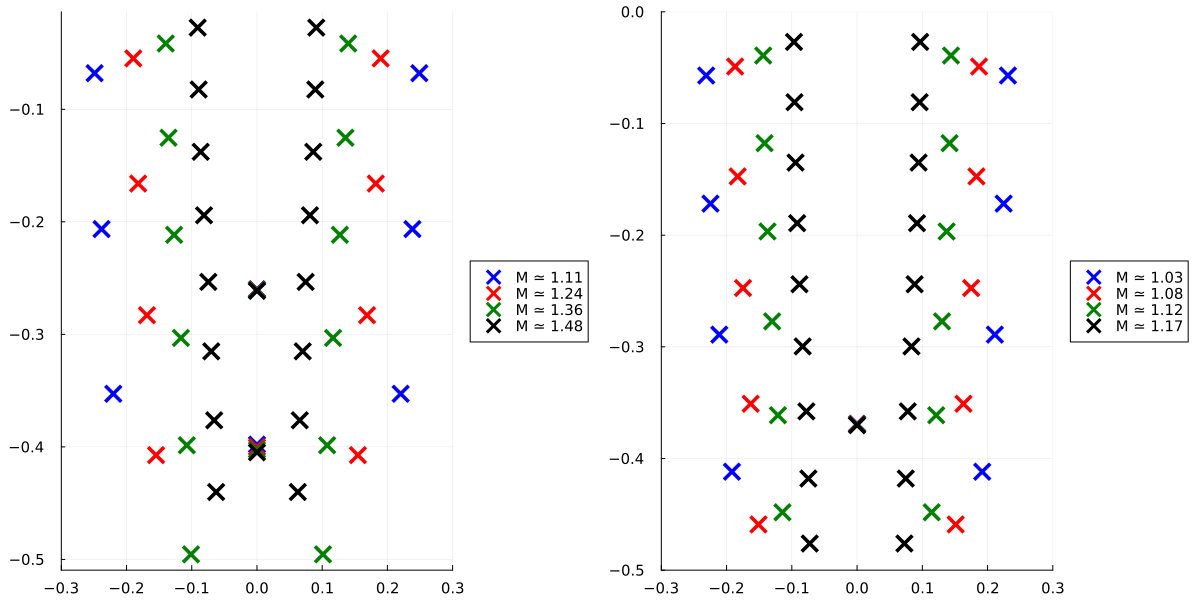}
    \captionsetup{justification=centering}
    \caption{Location of resonances for different values of the mass of a Reissner--Nordström--de Sitter black hole when $\Lambda = 0.05$ (left) and $\Lambda = 0.1$ (right).
    }
    \label{UnperRNdS}
\end{figure}
We start with the unperturbed case.
In Figure~\ref{UnperRNdS}, we plot the resonances for different values of the mass parameter. We fix the values $Q = 1$, $\Lambda \in \{0.01,0.5\}$, $l=1$, and then determine the admissible interval $[M_1, M_2]$ for the mass parameter~\eqref{Mokdad}. 
We observe that the closer the mass is to $M_2$, the larger the number of resonances and the more parallel the lines along which they are located.
This behavior seems similar to the one pointed out in or \cite[Figure 4,5,6]{Destounis}, \cite[Figure 1.1]{HinXie} or \cite[Figure 1]{HitZwo}.

\subsubsection{Comparison to exactly exponentially decaying potentials}

We now compare the location of resonances between exactly exponentially decaying potentials and Reissner--Nordström--de Sitter black hole derived potentials. Indeed, as mentioned in the introduction of Subsection~\ref{Eedp}, exactly exponentially decaying potentials can be seen as the dominant term in the development of the potential on a neighbourhood of $\pm \infty$ for a Reissner--Nordström--de Sitter black hole. For this comparison, we perturb both reference potentials with $q =\indic{[-1,4]}$. We take $(\mu_+,\mu_-) = (C_1^+,C_1^-)$ defined in \eqref{Vl+} and \eqref{Vl-} for the exactly exponentially decaying potential.
We also use the values of surface gravities $\kappa_\pm$ defined by the parameter of the black hole $(M,Q,\Lambda,l)=(0.01,0.01,5,1)$ or $(M,Q,\Lambda,l)=(0.1,0.1,0.1,1)$. We observe in Figure~\ref{BHRNdS} that the logarithmic branches on which the resonances are located are clearly different. {Contrarily, we observe in Figure~\ref{BHRNdS2} that the logarithmic branches seem to merge. A possible explanation for the difference in behavior between the two plots is the value of $C^\pm_1$. When we are in the situation of Figure~\ref{BHRNdS}, we note that $C^+_1 \approx 18.25$ and $C^-_1 \approx 13.02$. In Figure~\ref{BHRNdS2}, we have $C^+_1 \approx 0.47$ and $C^-_1 \approx 0.29$. Furthermore, the coefficients introduced in the expansion in power series of $V$~\eqref{Vl+} and~\eqref{Vl-} satisfy $|C_m| \leq e^{-\nu m}$ for $\nu$ a positive constant \cite[Proposition A.1]{Kehle}. Thus, starting with $C^\pm_1$ values that are close to zero implies that the subsequent coefficients become negligible more quickly, and so the error made by using only the first term of the series~\eqref{Vl+} and~\eqref{Vl-} is smaller. Therefore, it seems that for a choice of parameters $M$, $Q$, $\Lambda$, and $l$ such that $C^\pm_1$ are quite small, exactly exponentially decaying potentials provide a good approximation of the potential arising from Reissner--Nordström--de Sitter black holes.}
\begin{figure}[htb!]
    \centering
    \begin{subfigure}[t]{0.48\textwidth}
        \includegraphics[width=\linewidth]{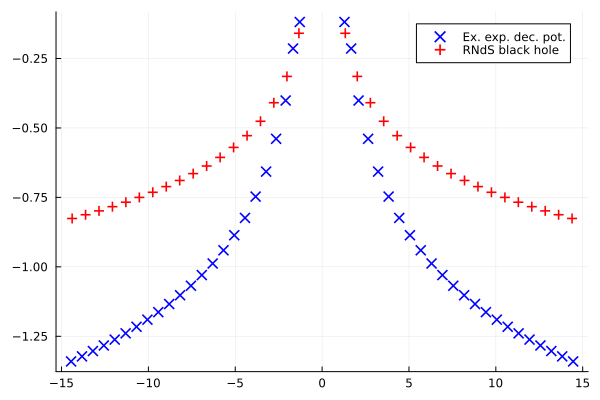}
        \captionsetup{justification=centering}
        \caption{$(M,Q,\Lambda,l)=(0.01,0.01,5,1)$}
        \label{BHRNdS}
    \end{subfigure}
    \hfill
    \begin{subfigure}[t]{0.48\textwidth}
        \includegraphics[width=\linewidth]{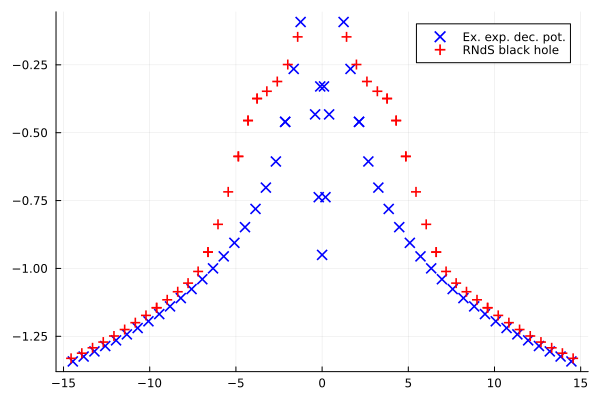}
        \captionsetup{justification=centering}
        \caption{$(M,Q,\Lambda,l)=(0.1,0.1,0.1,1)$}
        \label{BHRNdS2}
    \end{subfigure}
    \caption{Location of resonances for exactly exponentially decaying potential and Reissner--Nordström--de Sitter's potential {perturbed by $q = \indic{[-1,4]}$.}}
\end{figure}

\subsubsection{Different compactly supported perturbations of a Reissner--Nordström--de Sitter black hole}

We now compare the impact of the perturbation on the resonances.
To do so, 
we consider two different perturbations having the same support, the same regularity and the same values at the boundary of this support: $q_1 = \indic{[-\pi/2,3\pi/2]}$ and $q_2 = -\sin \indic{[-\pi/2,3\pi/2]}$.

\begin{figure}[htb!]
    \centering
    \begin{subfigure}[t]{0.48\textwidth}
        \includegraphics[width = 7.5cm]{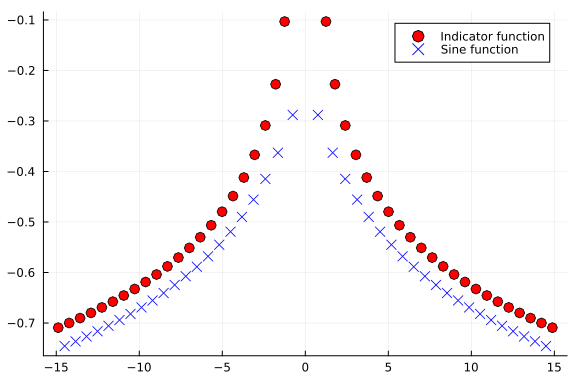}
        \captionsetup{justification=centering}
        \caption{Location of resonances for different type of perturbations of the same Reissner--Nordström--de Sitter black hole.}
        \label{DiffPerturbRNdS}
    \end{subfigure}
    \hfill
    \begin{subfigure}[t]{0.48\textwidth}
            \includegraphics[width = 7.5cm]{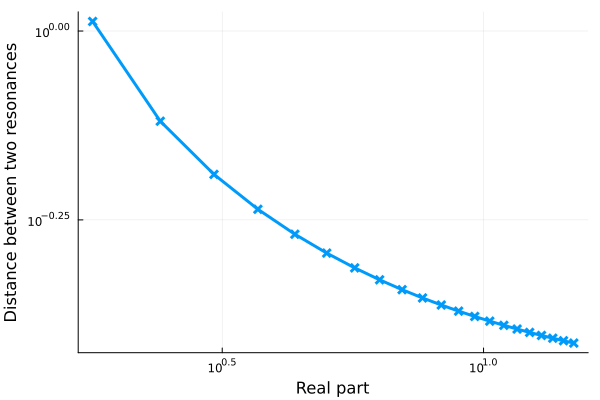}
        \captionsetup{justification=centering}
        \caption{Distance between two resonances for different type of perturbations of the same Reissner--Nordström--de Sitter black hole.}
        \label{DistanceResoRNdS}
    \end{subfigure}
    \caption{Location of resonances for compactly supported perturbed Reissner--Nordström--de Sitter black holes.}
\end{figure}
We observe in Figure~\ref{DiffPerturbRNdS} that the logarithmic lines obtained for the two perturbations are not merging as fast as they did in Figure~\ref{LocResDiffPot}. To check if these two lines will merge at one point, we plot the distance between two resonances.
In Figure~\ref{DistanceResoRNdS}, computing the distance between two resonances of a same pair, we observe that this distance seems to slowly tend to zero.
Actually, the distance between two resonances appears to be proportional to the inverse of the logarithm of the real part.
This leads us to think that, similarly to the case of Pöschl--Teller potentials {described in Figure~\ref{LocResDiffPot}}, the behavior of the perturbation within its support has no impact on the resonances' asymptotics.
The only difference is that in the case of black holes, the distance between resonances for different type of perturbations decreases more slowly than in the case of a Pöschl--Teller potential.

\subsubsection{Stability of small resonances}
This section is the analogue of the Reissner--Nordström--de Sitter case of Section~\ref{StabInstab} where we study the stability of resonances {when the reference potential is a Pöschl--Teller potential}. We consider such black hole with parameters $(M,Q,\Lambda,l)=(1,1,0.1,1)$, which are in the range of the values presented in \cite[Table 1]{Hatsuda}, and a perturbation $q = \indic{[-1,4]}$. We compute the resonances associated with the potential $V_0 + \tau q$ for $\tau \in [0,1]$.
\begin{figure}[htb!]
    \centering
    \includegraphics[width = 8cm]{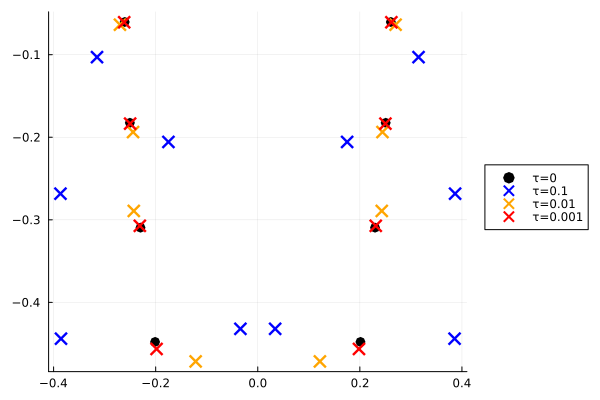}
    \captionsetup{justification=centering}
    \caption{Location of resonances for different perturbations of the same Reissner--Nordström--de Sitter black hole.}
    \label{StabRNdS}
\end{figure}
In Figure~\ref{StabRNdS}, we plot the resonances for different values of $\tau$, and in Figure~\ref{StabtauRNdS} (blue line),
we observe that similarly to the case of a Pöschl--Teller potential, the small resonances are stable.
We are also interested in the behavior of resonances for perturbations of a compactly supported perturbed Reissner--Nordström--de Sitter black hole, \textit{i.e.} resonances obtained for a potential $V_0 + q_1 + \tau q_2$ for $\tau \in [0,1]$. We consider $q_1 = \indic{[-1,4]}$ and $q_2 = \indic{[-1/2,3]}$. 
Similarly as in the Pöschl--Teller case, we observe in Figure~\ref{StabtauRNdS} (orange line) that the resonances are stable with a smaller prefactor than with the perturbation of the reference potential.
\begin{figure}[H]
    \centering
    \includegraphics[width = 8cm]{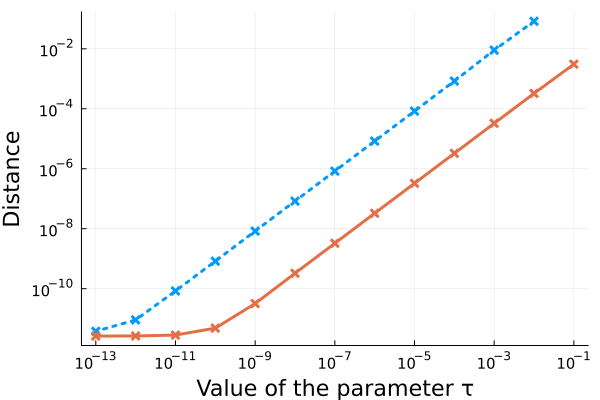}
    \captionsetup{justification=centering}
    \caption{Supremum of the distance between resonances obtained for a Reissner--Nordström--de Sitter black hole (dotted line) and a compactly supported perturbed Reissner--Nordström--de Sitter black hole (continuous line) between the exact results and resonances obtained for any $\tau$.}
    \label{StabtauRNdS}
\end{figure}

\subsubsection{Strong cosmic censorship hypothesis}

As an application of the numerical computation of resonances, we present results related to the  strong cosmic censorship, which is an important conjecture in general relativity, first formulated by Penrose in 1969~\cite{Penrose}. It states that general relativity is a deterministic theory, i.e. that the Cauchy horizon $r_c$ is generically unstable and cannot be crossed without encountering a breakdown of determinism~\cite[Introduction]{Cardoso}.
Mathematically speaking, the strong cosmic censorship hypothesis holds if there exists a resonance $z\in\C$ which satisfies~\cite[(2.26)]{DaveyDiasSola}
\begin{equation}
\label{SCC}
-\frac{\Im(z)}{|\kappa_c|} \leq \frac{1}{2},
\end{equation}
where $\kappa_c$ is defined in~\eqref{kappaeq}.
Resonances of a Reissner--Nordström--de Sitter black holes with parameters $(M,Q,\Lambda)$ are defined as the union over $l \in \mathbb{N}$ of all the resonances of~\eqref{Premiere} with a potential $V$ defined as in \eqref{eq:pot_reissner}.
Even though there exist theoretical and numerical results concerning strong cosmic censorship for various values of black hole parameters and also for different kind of black holes, see e.g.~\cite{Cardoso,DaveyDiasSola,Dias}, we aim to investigate how this conjecture behaves for compactly supported perturbed Reissner--Nordström--de Sitter black hole potentials. 
In practice we check whether~\eqref{SCC} holds for $l\in \{ 0,1,\ldots, 25\}$, and note that in the case where~\eqref{SCC} is not satisfied we fail to prove that the strong cosmic censorship hypothesis holds, rather than prove that the conjecture is violated.

We consider two different perturbations: 
$q_1 = \indic{[-\pi/2,3\pi/2]}$ and
$q_2 = \sin\indic{[-\pi/2,3\pi/2]}$. 
We consider two different charge values $Q$ ($Q=0.1$ and $Q=0.7$), and then using~\eqref{Mokdad}, we determine the admissible values for $\Lambda$ and $M$. We then indicate on Figure~\ref{HeatMap01} (resp. Figure~\ref{HeatMap07}) the parameters for which the hypothesis is valid for $Q=0.1$ (resp. $Q=0.7$).

\begin{figure}[H]
    \centering
    \includegraphics[width = 15cm, height=6.5cm]{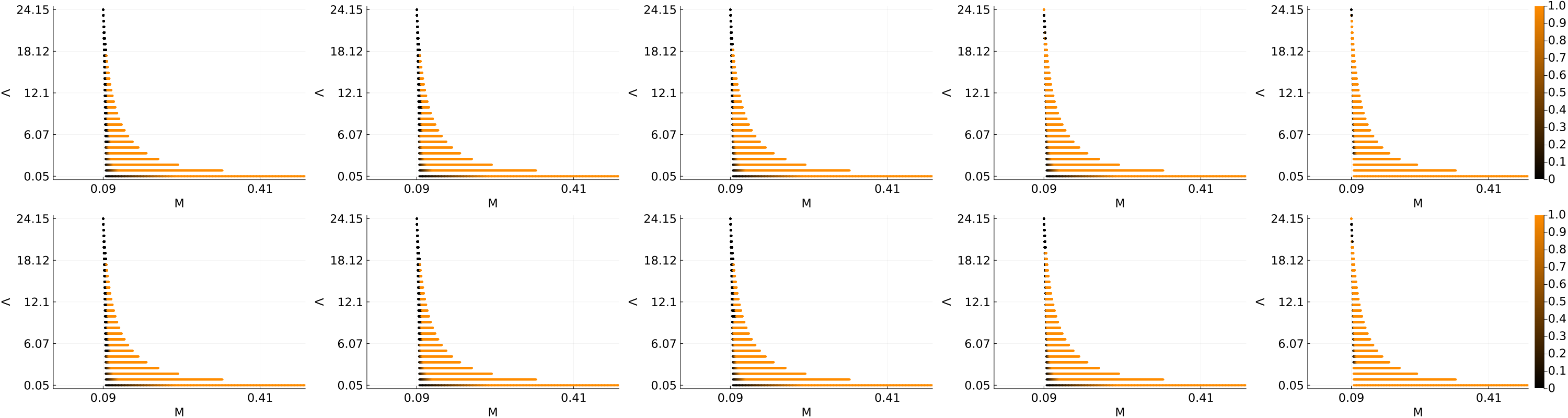}
    \captionsetup{justification=centering}
    \caption{Validation of the strong cosmic censorship hypothesis ($1$ in orange when the hypothesis is satisfied, and $0$ in black otherwise) for a Reissner--Nordström--de Sitter black hole with $Q=0.1$, perturbed by $\tau q_1$ (top) and $\tau q_2$ (bottom) for $\tau \in \{0,10^{-3},10^{-2},10^{-1},1\}$.}
    \label{HeatMap01}
\end{figure}
\begin{figure}[H]
    \centering
    \includegraphics[width = 15cm, height=6.5cm]{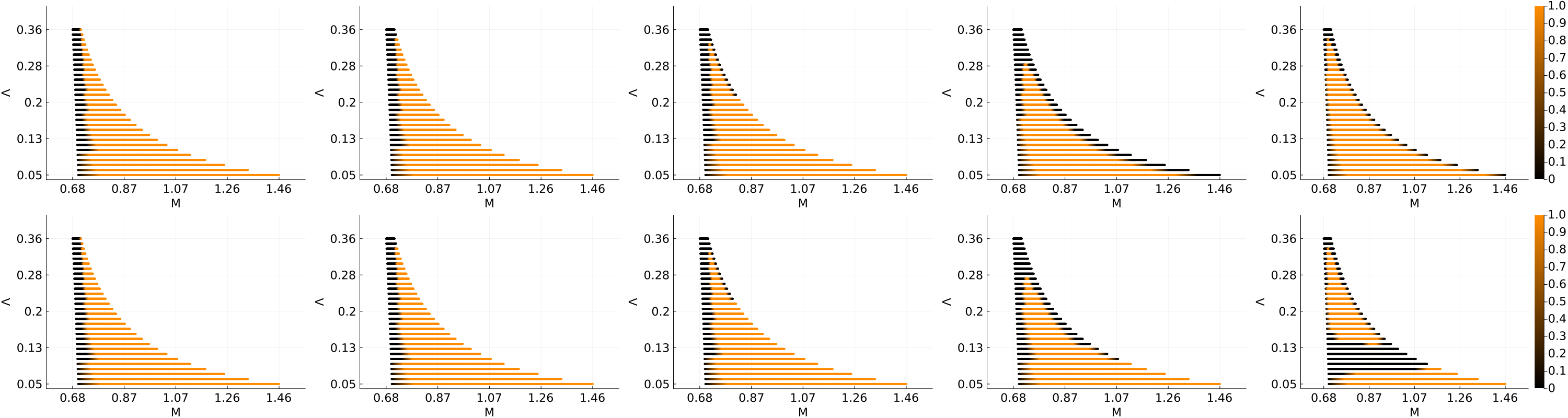}
    \captionsetup{justification=centering}
    \caption{Validation of the strong cosmic censorship hypothesis ($1$ in orange when the hypothesis is satisfied, and $0$ in black otherwise) for a Reissner--Nordström--de Sitter black hole with $Q=0.7$, perturbed by $\tau q_1$ (above) and $\tau q_2$ (below) for $\tau \in \{0,10^{-3},10^{-2},10^{-1},1\}$.}
    \label{HeatMap07}
\end{figure}

Comparing Figure~\ref{HeatMap01} and Figure~\ref{HeatMap07}, we observe that the validity of the strong cosmic censorship hypothesis for black holes with identical perturbations strongly depends on the parameter values, in particular the charge~$Q$.
A common feature of Figures~\ref{HeatMap01} and~\ref{HeatMap07} is that, for the unperturbed cases, we observe that the parameter values for which the strong cosmic censorship hypothesis is not guaranteed to hold correspond to small masses.
Also, as the scaling parameter $\tau$ in the perturbation tends to zero, the hypothesis validation seems to converge to the unperturbed $\tau = 0$ case, as
expected from the stability of the first resonances mentioned in Figure~\ref{StabtauRNdS}.

Finally we provide in Table~\ref{tab} the values of the parameter $l$ for which~\eqref{SCC} is satisfied, illustrating that different behaviors may occur depending on the parameters. Starting from a black hole that does not satisfy the strong cosmic censorship hypothesis (as in Table~\ref{tab}, columns 2-3), one may obtain a perturbed black hole that either satisfies or does not satisfy the hypothesis. Similarly, starting from a black hole that satisfies the strong cosmic censorship hypothesis (as in Table~\ref{tab}, columns 4-5), one may also obtain a perturbed black hole that either satisfies or does not satisfy the hypothesis.
\begin{table}[htb!]
\centering       
\scalebox{0.85}{
\begin{tabular}{|c|c|c|c|c|}
\hline
\multirow{2}{*}{Perturbation} & \multicolumn{4}{c|}{$(M,Q,\Lambda)$} \\ \cline{2-5}
 & $(0.7,0.7,0.1)$ & $(0.72,0.7,0.25)$ & $(1.45,0.7,0.05)$ & $(1,0.953,0.085)$ \\ \hline
0 & \texttimes & \texttimes & \checkmark (for $l \in \llbracket 0, 25 \rrbracket$) & \checkmark (for $l \in \llbracket 0,11 \rrbracket$) \\ \hline
$q_1$ & \checkmark (for $l \in \llbracket0,4\rrbracket$) & \checkmark (for $l\in \llbracket 0,9 \rrbracket$) & \texttimes & \checkmark (for $l \in \llbracket 0, 10 \rrbracket$) \\ \hline
$10^{-1}q_1$ & \texttimes & \checkmark (for $l \in \{6,7\}$) & \texttimes & \checkmark (for $l \in \llbracket 4,7 \rrbracket$) \\ \hline
$10^{-2}q_1$ & \texttimes & \checkmark (for $l=5$) & \checkmark (for $l \in \llbracket 2,18 \rrbracket$) & \checkmark (for $l \in \llbracket 3,11 \rrbracket$) \\ \hline
$10^{-3}q_1$ & \texttimes & \texttimes & \checkmark (for $l \in \llbracket 0,18 \rrbracket$) & \checkmark (for $l \in \llbracket 0,11 \rrbracket$) \\ \hline
$q_2$ & \texttimes & \checkmark (for $l \in \llbracket 0, 9 \rrbracket$) & \checkmark (for $l \in \llbracket11,25 \rrbracket$) & \texttimes \\ \hline
$10^{-1}q_2$ & \texttimes & \checkmark (for $l =6$) & \checkmark (for $l \in \llbracket 4, 25 \rrbracket$) & \texttimes \\ \hline
$10^{-2}q_2$ & \texttimes & \texttimes & \checkmark (for $l \in \llbracket 0, 25 \rrbracket$) & \checkmark (for $l \in \llbracket 5,11 \rrbracket$) \\ \hline
$10^{-3}q_2$ & \texttimes & \texttimes & \checkmark (for $l \in \llbracket 0, 25 \rrbracket$) & \checkmark (for $l \in \llbracket 1,11 \rrbracket$) \\ \hline
\end{tabular}
}
\caption{Validation of the strong cosmic censorship hypothesis for different perturbations of Reissner--Nordström--de Sitter black hole which satisfies the hypothesis.}
\label{tab}
\end{table}

\section{Conclusion}

In this work, we have presented a numerical method to compute resonances for one-dimensional problems, based on a Newton's deflated algorithm.
The main contribution of our approach is that it allows us to determine resonances for compactly supported perturbation of a reference potential $V_0$ as long as one can compute the Jost solutions for the reference potential.
We have shown that using well-chosen reference potentials allows to avoid numerical instability issues observed with a zero reference potential.
Moreover, using this method, we have studied the influence of perturbations on the resonances, and in particular studied their stability in the context of Reissner--Nordström--de Sitter black holes, and provided a numerical study of the strong cosmic censorship hypothesis.
It is natural to wonder whether this stability result can be proven theoretically; this is left for future work.

\section*{Acknowledgements}

The authors would like to thank Nabile Boussaïd and Thierry Daudé for fruitful discussions and comments.
This research was funded in part by the Agence Nationale de la Recherche (ANR), project NUMERIQ (ANR-24-CE46-2255).
The authors acknowledge support from the EIPHI Graduate School (contract ANR-17-EURE-0002).

\printbibliography

\end{document}